\documentclass[12pt,oneside,english]{amsart}
\usepackage[T1]{fontenc}
\usepackage[latin9]{inputenc}
\usepackage{geometry}
\geometry{verbose,tmargin=3cm,bmargin=3cm,lmargin=3cm,rmargin=2.5cm}
\setlength{\parindent}{0bp}
\usepackage{color}
\usepackage{amstext}
\usepackage{amsthm}
\usepackage{amssymb}

\usepackage{amsmath}
\usepackage{epsfig} 
\usepackage{graphics} 
\usepackage{amsfonts}
\usepackage{amssymb}
\usepackage{graphicx}
\usepackage{enumerate}
\usepackage{soul}
\usepackage[unicode=true,pdfusetitle,
 bookmarks=true,bookmarksnumbered=false,bookmarksopen=false,
 breaklinks=false,pdfborder={0 0 1},backref=false,colorlinks=false]
 {hyperref}




\makeatletter
\theoremstyle{plain}
\newtheorem{thm}{\protect\theoremname}
\theoremstyle{definition}
\newtheorem{defn}[thm]{\protect\definitionname}
\theoremstyle{plain}
\newtheorem{prop}[thm]{\protect\propositionname}
\theoremstyle{plain}
\newtheorem{lem}[thm]{\protect\lemmaname}
\theoremstyle{plain}
\newtheorem{rem}[thm]{\protect\remarkname}
\theoremstyle{plain}
\newtheorem{cor}[thm]{\protect\corolname}
\makeatother

\usepackage{babel}
\providecommand{\definitionname}{Definition}
\providecommand{\propositionname}{Proposition}
\providecommand{\theoremname}{Theorem}
\providecommand{\lemmaname}{Lemma}
\providecommand{\remarkname}{Remark}
\providecommand{\corolname}{Corollary}
\newcommand{\sm}{\smallskip}
\newcommand{\md}{\medskip}
\newcommand{\bg}{\bigskip}
\newcommand{\nd}{\noindent}
\newcommand{\RR}{\mathbb{R}}
\newcommand{\dis}{\displaystyle}
\newcommand{\de}{{\rm d}}
\newcommand{\beq}{\begin{equation}}
\newcommand{\eeq}{\end{equation}}

\newcommand{\eps}{\varepsilon}

\begin{document}
\pagestyle{plain}
\title{wavefront solutions for reaction-diffusion-convection models with
accumulation term and aggregative movements}
\author{Marco Cantarini$^{A}$, Cristina Marcelli$^{B}$, Francesca Papalini$^{B}$}
\maketitle

\noindent
\small\textit{$\,^{A}$Dipartimento di Matematica ed Informatica, Universit\`a degli Studi di Perugia, Via Vanvitelli 1, Perugia I-06123, Italy. \\
$\,^{B}$Dipartimento di Ingegneria Industriale e Scienze Matematiche, Universit\`a Politecnica delle Marche, Via Brecce Bianche 12, Ancona I-60131, Italy.}\\

\small\textit{email: marco.cantarini@unipg.it,
c.marcelli@staff.univpm.it,
f.papalini@staff.univpm.it}\\

\begin{abstract}
In this paper we analyze the wavefront solutions of parabolic partial differential
equations of the type
\[
g(u)u_{\tau}+f(u)u_{x}=\left(D(u)u_{x}\right)_{x}+\rho(u),\quad u\left(\tau,x\right)\in[0,1]
\]
where the reaction term \(\rho\) is of monostable-type.  
We allow the diffusivity \(D\) and the accumulation term \(g\) to have a finite number of changes of sign.

We provide an existence result of travelling wave solutions (t.w.s.) together with an estimate of the threshold wave speed.
Finally, we classify the t.w.s. between classical and sharp ones. 
\end{abstract}

\maketitle

\bigskip

{\bf AMS} Subject  Classifications: 35K57, 35K65, 35C07, 35K55,   34B40, 34B16, 92D25

\medskip

{\bf Keywords}: reaction-diffusion-convection equations, travelling wave
solutions, speed of propagation, degerenate parabolic equation, singular boundary value problems, aggregative movements.

\section{Introduction}
In this paper we study the existence and the properties of travelling wave solutions (t.w.s.) for the following reaction-diffusion-convection equation
\begin{equation}
g(v)v_{\tau}+f(v)v_{x}=\left(D(v)v_{x}\right)_{x}+\rho(v), \quad v\left(\tau,x\right)\in[0,1]\label{eq:main}
\end{equation}
where 
\begin{equation}
g,f,\rho,D\in C\left([0,1]\right),\,D\in C^{1}\left(0,1\right),\label{ip:funz}
\end{equation}
\begin{equation}
\rho(v)>0,\,\text{ for every } v\in(0,1), \quad \rho(0)=\rho(1)=0,\label{eq:sign rho}
\end{equation}
arising in several physical and biological phenomena. Owing to the various applications, the study of reaction-diffusion-convection equations has been widely developed, but in general for equations in which \(g(v)\equiv 1\).

On the other hand, the presence of the accumulation term  $g$  allows to consider relevant physical phenomena, such as thermal processes when the heat capacity of the medium depends on temperature and in the theory of filtration of a fluid in a porous media (see \cite{CDDSV,D}).

\sm
Our interest is addressed to the investigation of t.w.s., that is solutions of the type $v(\tau,x):=u(x-c\tau)$, where \(c\) is the wave speed.  Such functions represent a relevant class of solutions of the equation \eqref{eq:main}, since 
at least in simple models the solution of initial-boundary value problem for the differential equations converges, for large times and in a some specific sense, to a profile of a t.w.s. (see, e.g.  \cite{AV,DT,KR}).  

\sm
 Notice that a t.w.s. of \eqref{eq:main}  in an interval \((a,b)\subset \mathbb{R}\)  is a solution
of the following second-order (possibly singular) equation
\begin{equation}
\left(D\left(u\right(t))u^{\prime}(t)\right)^{\prime}+\left(cg\left(u(t)\right)-f(u(t))\right)u^{\prime}(t)+\rho(u(t))=0 \quad \text{ for every } t\in (a,b)\label{eq:E}
\end{equation}
where ${}^{\prime}$ means that we differentiate with respect $t:=x-c\tau$,
connecting the zeros of the reaction term \(\rho\), that is the equilibria of the equation; hence \(u\) satisfies the 
 boundary conditions  \(u(a^+)=1\) and \(u(b^-)=0.\)

\sm 

The literature concerning t.w.s. for reaction-diffusion-convection equations is very wide.
 In the case $g(u)\equiv 1$, that is, the accumulation term $g$ is not significant in the model, and the reaction term is such that $\rho(u)>0,\,u\in (0,1)$ (the so-called monostable case), it is well known that  there exists a threshold minimal speed $c^{*}$, which can be explicitly estimated, such that if $c\geq c^{*}$, then the model \eqref{eq:main} admits t.w.s., and also the converse implication holds true. 

In general, if $c > c^{*}$,  the t.w.s. are defined and continuously differentiable on the whole real line. Instead, if $c=c^{*}$, the situation is more delicate: the t.w.s. is again smooth on the real line if the diffusion $D$ does not vanish at the equilibria $0,1$ (non degenerate case). Otherwise,  the t.w.s. with \(c=c^*\) can reach one/both the equilibria in a finite time with a non-zero slope and the dynamic is said to exhibit the phenomenon of finite speed of propagation and/or finite speed of saturation. For some references to these facts and more informations see, e.g., \cite{ACM,GM,L,MM,SM}. 

\sm
In the recent paper \cite{CaMaPa}, we proved that such a type of the aformentioned results can be achieved also if the accumulation $g$ is a continuous function not necessarily constant neither positive, so that 
the equation  presents various types of degeneracies, since both \(D\) and \(g\) can vanish, even simultaneously. Nevertheless, also in this case there exists a threshold minimum wave speed and it is possible to classify the emerging t.w.s. (see \cite[Theorem 14 and 16]{CaMaPa}).

\sm

In recent years, an increasing interest has been addressed to the investigation of aggregative-diffusive models, in which the term \(D\) can have changes of sign (see  \cite{BCM,BCM1,GK,GK2,MM,MMM}). Of course, the sign of \(D\)  can influence 
 the existence of t.w.s. and their regularity at the points where \(D\) vanishes. 
Therefore, the natural question arises is if it is possible to extend the results obtained in \cite{CaMaPa} in the case of positive diffusive terms (but with a non-constant accumulation term) to the more general framework of functions \(D\) having, at the most, a finite number of changes of sign. This is just the aim of this paper, in which we deal with equation \eqref{eq:E} under the assumptions \eqref{ip:funz}, \eqref{eq:sign rho} and 
\beq \label{ip:Dfinite}
D_{0}:=\left\{ u\in\left(0,1\right):\,D\left(u\right)=0\right\}  \ \text{ if finite, possibly empty.}
\eeq
Equations as \eqref{eq:main} with chainging-sign \(g\) and \(D\) arise, for instance, in the study of t.w.s. for the telegraph equation (see \cite{GK3}).

\sm
Our main result is Theorem \ref{t:main2} which asserts that under certain assumptions on the sign on the integral function of \(g\), there exists a threshold value \(\hat c\) such that if \(c>\hat c\) equation \eqref{eq:E} admits t.w.s., whereas they do not exist for \(c<\hat c\). Moreover, we also provide an estimate for the value \(\hat c\) which generalizes the known results for equations that are particular cases of \eqref{eq:E}. Finally, Proposition \ref{p:class} concerns the classification of the t.w.s.

\sm
We underline that this context includes also the case in which \(D\) does not change sign, but it may vanish somewhere in \((0,1)\) (see Example 3).

\sm
Finally,
we point out that in the case of changing-sign diffusivities, the existence of t.w.s. which are smooth when they assume values between the equilibria is no more ensured when \(c=\hat c\). We discuss this fact in Remarks \ref{r:cstar} and \ref{r:cstar-2}. 
However, in the particular case in which \(D\) has an unique change of sign, from positive to negative, this phenomenon does not occur and we can state the existence of t.w.s. for \(c=\hat c\) too (see Corollary \ref{c:one}).

\md
 The general technique we adopt consists in using the results of [7] in the intervals where D does not change sign and then to find conditions ensuring that the solutions  found in each interval can be glued together in a regular way. This will have consequences  on the admissible wave speeds.
 
\md 
The article is organized as follows. In Section 2 we recall some preliminary definitions and results; in Section 3 we show how the existence of t.w.s. is equivalent to  the solvability of a first order singular problem which is investigated  in Section 4. Section 5 is devoted to the existence/non-existence of t.w.s. and in Section 6 we provide the classification of t.w.s. Finally, we provide some simple examples of equations \eqref{eq:main}, discussing the existence of t.w.s, the estimate of \(\hat c\) and the classification of the t.w.s.

\section{Preliminary results}\label{s:preliminary}

\bg
Troughout the paper, we always assume the validity of conditions 
\eqref{ip:funz}, \eqref{eq:sign rho} and \eqref{ip:Dfinite}.

\md 
First of all, we give the definition of t.w.s.

\begin{defn}
\label{d:tws}
A travelling wave solution (t.w.s. for short) of (\ref{eq:main}) is a function
$u\in C^{1}(a,b)$, with $(a,b)\subseteq\mathbb{R}$, such that $u(t)\in [0,1]$,
$D(u(\cdot))u^{\prime}(\cdot)\in C^{1}\left(a,b\right)$, satisfying equation (\ref{eq:E})
in $(a,b)$ and such that 
\begin{equation}
u\left(a^{+}\right)=1,\,u\left(b^{-}\right)=0\label{eq:as}
\end{equation}

\begin{equation}
\lim_{t\rightarrow a^{+}}D\left(u(t)\right)u^{\prime}(t)=\lim_{t\rightarrow b^{-}}D\left(u(t)\right)u^{\prime}(t)=0.\label{eq:limsh}
\end{equation}
\end{defn}

In what follows, we assume, without restriction, that the t.w.s. are always defined in their maximal existence interval, that is the maximal interval \((a,b)\), bounded or unbounded,  in which the previous conditions are satified. 

\sm
 When
 \((a,b)=\RR\) then condition \eqref{eq:limsh} is automatically satified, as the following result states.

\md

\begin{prop}
Let $u$ be a solution of (\ref{eq:E}) for some $c\in\mathbb{R}$,
satisfying (\ref{eq:as}). Then, if $a=-\infty$ we have $\dis\lim_{t\to -\infty}D\left(u(t)\right)u^{\prime}(t)=0$
and if $b=+\infty$ we have $\dis\lim_{t\to +\infty}D\left(u(t)\right)u^{\prime}(t)=0$.
\end{prop}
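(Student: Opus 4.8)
The plan is to study the quantity $z(t) := D(u(t))u'(t)$, which is $C^1$ because $u$ solves \eqref{eq:E}, and to show that it tends to $0$ at each infinite endpoint. I would detail the case $b=+\infty$ (where $u(t)\to 0$); the case $a=-\infty$ (where $u(t)\to 1$) is completely analogous. Rewriting \eqref{eq:E} as $z'(t)=-\big(cg(u(t))-f(u(t))\big)u'(t)-\rho(u(t))$ and integrating from a fixed $t_0$, I would exploit the antiderivative $G(v):=\int_0^v\big(cg(r)-f(r)\big)\,\de r$ to recognize that $\int_{t_0}^t\big(cg(u)-f(u)\big)u'\,\de s=G(u(t))-G(u(t_0))$.

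First I would establish that $\lim_{t\to+\infty}z(t)$ exists in $[-\infty,+\infty)$. Indeed, since $u(t)\to 0$ and $G$ is continuous, $G(u(t))\to G(0)=0$, so the convection contribution converges to a finite limit; on the other hand $\int_{t_0}^t\rho(u(s))\,\de s$ is nondecreasing in $t$, because $\rho(u(s))\ge 0$ (as $u\in[0,1]$ and $\rho\ge 0$ there), hence it converges in $[0,+\infty]$. Combining the two, $z(t)\to L$ for some $L\in[-\infty,+\infty)$.

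The decisive step is to identify $L$ as the derivative-limit of a convergent function. Setting $W(v):=\int_0^v D(r)\,\de r$, which is $C^1$ with $W'=D$, the chain rule gives $\big(W(u(t))\big)'=D(u(t))u'(t)=z(t)$. Since $u(t)\to 0$ and $W$ is continuous, $W(u(t))\to W(0)=0$, i.e. $W(u(\cdot))$ converges to a finite value while its derivative tends to $L$. A standard elementary fact then forces $L=0$: by the mean value theorem, $W(u(t+1))-W(u(t))=z(\xi_t)$ for some $\xi_t\in(t,t+1)$; letting $t\to+\infty$ the left-hand side tends to $0$ whereas $z(\xi_t)\to L$, whence $L=0$.

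The appealing feature of this argument, and the reason I would route it through $W$ rather than through $u'$ directly, is that it is insensitive to the value of $D$ at the equilibrium: it works uniformly whether $D(0)\neq 0$ or $D(0)=0$ (the genuinely degenerate case, in which $u'$ may blow up). The only point requiring some care, and the main potential obstacle, is the rigorous justification that $z$ admits a limit in the extended sense; this rests precisely on splitting $z'$ into a convection part that converges by continuity of $G$ and a sign-definite reaction part that is monotone in $t$, so that no oscillation can occur. For $a=-\infty$ one repeats the scheme with $u(t)\to 1$, noting only that now the reaction integral forces $L\in(-\infty,+\infty]$, and the same mean value argument yields $L=0$.
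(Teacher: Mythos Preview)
Your argument is correct and in fact cleaner than the paper's. Both proofs share the first step: integrating \eqref{eq:E} to express $z(t)$ as a convection contribution (which converges because $G(u(t))\to G(0)$) minus the monotone reaction integral, so that $z(t)\to L\in[-\infty,+\infty)$. They diverge in how $L=0$ is obtained. The paper first singles out a time $T$ with $u(T)\in D_0$ (so that $z(T)=0$), notes that $D\circ u$ keeps a constant sign for $t>T$, and argues that if $L\neq 0$ then $u'=z/D(u)$ has a nonzero (possibly infinite) limit, contradicting the boundedness of $u$; the case $D_0=\emptyset$ is handled separately by quoting \cite{CaMaPa}. Your route via the antiderivative $W(v)=\int_0^v D(r)\,\de r$ and the mean value theorem applied to $W\circ u$ sidesteps all of this: it never divides by $D$, needs no sign information on $D$ near the equilibrium, and treats the cases $D_0=\emptyset$ and $D_0\neq\emptyset$ uniformly. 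The cost is negligible (one auxiliary primitive), while the gain is a single self-contained argument covering every configuration.
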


\begin{proof}
Let \(D_0:=\{ u\in (0,1): D(u)=0\}\). If \(D_0=\emptyset\) then the result has been proved in \cite[Proposition 3]{CaMaPa}. So, assume now \(D_0\) is nonempty, but finite, say
\( D_0=\{ u_1, \cdots, u_n\}.\)

Let us consider the case \(b=+\infty\) (the case \(a=-\infty\) is analogous).

Assume $b=+\infty$. Put  $T:=\inf\left\{ \xi:u\left(s\right)\leq u_{1},\,\forall s\in\left(\xi,+\infty\right)\right\} $, we have \(T>-\infty\) and \(u(T)=u_1\), so \(D(u(T))=0\). For any $t>T$, 
integrating \eqref{eq:E} in $\left[T,t\right]$ we
obtain
\[
 D\left(u(t)\right)u^{\prime}(t)=\int_{u_1}^{u(t)} (f(s)-cg(s) ) \de s -\int_{T}^{t}\rho\left(u(s)\right)\de s.
\]

Since \(\rho\) is a positive function, 
there exists (finite or not) the limit $\dis\lim_{t\to +\infty}\int_{T}^{t}\rho\left(u(s)\right) \de s$, hence there exists also the limit
\[
\lim_{t\rightarrow+\infty}D\left(u(t)\right)u^{\prime}(t)=:\lambda\in\left[-\infty,+\infty\right).
\]
Since $D$ is bounded and has constant sign in \([T,+\infty)\), if  $\lambda\neq0$ then there exists (finite or not) also the limit
\[
\lim_{t\rightarrow+\infty}u^{\prime}(t)=\lim_{t\rightarrow+\infty}\frac{D\left(u(t)\right)u^{\prime}(t)}{D\left(u(t)\right)}\ne 0,
\] in contradiction with the boundedness of $u$. So, we derive that
$\lambda=0$. 
\end{proof}

\bg

The next Lemma provides a necessary condition for the admissible wave speeds, which will be used in the following proposition.

\begin{lem}
\label{prop:segno int} If there exists a  t.w.s.  $u$  in \((a,b)\), then 
\beq
\int_{0}^{1}\left[cg\left(s\right)-f(s)\right] \de s>0.
\label{eq:intpos} \eeq
\end{lem}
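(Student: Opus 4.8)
The plan is to integrate equation \eqref{eq:E} over the whole existence interval and exploit the boundary behaviour encoded in Definition \ref{d:tws}. Writing \eqref{eq:E} as $\bigl(D(u)u'\bigr)' = -\bigl(cg(u)-f(u)\bigr)u' - \rho(u)$ and integrating over $(a,b)$, the left-hand side contributes $\lim_{t\to b^-}D(u)u' - \lim_{t\to a^+}D(u)u'$, which is $0$ by \eqref{eq:limsh}. The middle term telescopes after a change of variable: since $u$ is continuous with $u(a^+)=1$ and $u(b^-)=0$, we have $\int_a^b \bigl(cg(u(t))-f(u(t))\bigr)u'(t)\,\de t = \int_1^0 \bigl(cg(s)-f(s)\bigr)\de s = -\int_0^1\bigl(cg(s)-f(s)\bigr)\de s$. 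Hence the integrated identity becomes
\[
0 = \int_0^1\bigl[cg(s)-f(s)\bigr]\de s - \int_a^b \rho(u(t))\,\de t,
\]
so that $\int_0^1\bigl[cg(s)-f(s)\bigr]\de s = \int_a^b \rho(u(t))\,\de t$.

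It remains to show this last integral is strictly positive. Since $\rho(u(t))\ge 0$ everywhere (because $u$ takes values in $[0,1]$ and $\rho\ge 0$ there by \eqref{eq:sign rho}), the integral is nonnegative; strict positivity follows because $u$ cannot be identically $0$ or $1$. Indeed, $u$ attains both boundary values $1$ and $0$, so by continuity it passes through every value in $(0,1)$, on which $\rho>0$; thus $\rho(u(t))>0$ on a set of positive measure, forcing the integral to be strictly positive. This yields \eqref{eq:intpos}.

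The one point requiring care is the legitimacy of integrating $\rho(u(t))$ over the possibly unbounded and a priori only $C^1$-solution on $(a,b)$: one must ensure the telescoping of the convection term and the vanishing of the flux term are valid even when $a=-\infty$ or $b=+\infty$. Here I would invoke the Proposition preceding this Lemma, which guarantees $\lim_{t\to a^+}D(u)u'=\lim_{t\to b^-}D(u)u'=0$ in the unbounded case as well, so that \eqref{eq:limsh} holds regardless of boundedness of the interval. The existence (finite or infinite) of $\int_a^b\rho(u(t))\,\de t$ is ensured by monotone considerations since the integrand is nonnegative; the identity above then shows this integral equals a finite number, namely $\int_0^1\bigl[cg(s)-f(s)\bigr]\de s$, so it is in fact finite. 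The main (though modest) obstacle is thus handling the boundary terms uniformly across the bounded and unbounded cases; once the flux terms are known to vanish at both endpoints, the argument is a direct integration combined with the sign of $\rho$.
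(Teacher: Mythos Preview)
Your proof is correct and follows essentially the same approach as the paper: integrate \eqref{eq:E} over $(a,b)$, use \eqref{eq:limsh} to kill the flux term, rewrite the convection integral via the fundamental theorem of calculus and the boundary conditions \eqref{eq:as}, and conclude by the strict positivity of $\rho$ on $(0,1)$. The paper's version is terser, but the argument is the same; your additional remarks on the legitimacy of the improper integrals and the strict positivity are sound and do not depend on monotonicity of $u$ (which is only established afterwards).
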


\begin{proof}
Integrating (\ref{eq:E}) in $(a,b)$,  
by (\ref{eq:limsh}), we get 
\[
0<\int_{a}^{b}\rho\left(u(t)\right)\de t=-\int_{a}^{b}\left[cg\left(u(t)\right)-f\left(u(t)\right)\right]u^{\prime}(t)\de t=\int_{0}^{1}\left[cg\left(s\right)-f(s)\right]\de s.
\]
\end{proof}

The following result concerns the monotonicity property of the t.w.s., which is the key tool in order to reduce \eqref{eq:E} to a first order equation.

\begin{prop}
\label{prop:monotone u} If $u$ is a t.w.s. in \((a,b)\). Then,   the set 
\[ I_u:=\{t \in (a,b): \ 0<u(t)<1\}\]
is an interval and  \(u'(t)<0\) whenever \(t\in I_u\).

\end{prop}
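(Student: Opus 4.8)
The plan is to combine a pointwise analysis at the critical points of $u$ with a global monotone quantity. I would first introduce the flux $\phi(t):=D(u(t))u'(t)$, which belongs to $C^{1}(a,b)$ by Definition \ref{d:tws}, and rewrite \eqref{eq:E} as $\phi'(t)=-\bigl(cg(u(t))-f(u(t))\bigr)u'(t)-\rho(u(t))$. The decisive observation is that at any $t_{0}\in I_{u}$ with $u'(t_{0})=0$ the convective term drops out, so that $\phi(t_{0})=0$ while $\phi'(t_{0})=-\rho(u(t_{0}))<0$ by \eqref{eq:sign rho}. Hence $\phi$ is strictly decreasing across every such $t_{0}$, passing from positive to negative values; since $\phi\neq0$ forces $u'\neq0$, the zeros of $u'$ in $I_{u}$ are isolated, so $u$ is piecewise strictly monotone on $I_{u}$ and its monotonicity can only flip at these points. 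Where $D(u(t_{0}))\neq0$ one may further differentiate to get $D(u(t_{0}))u''(t_{0})=-\rho(u(t_{0}))<0$, which shows that such a critical point is a \emph{strict local minimum when $D(u(t_{0}))<0$ and a strict local maximum when $D(u(t_{0}))>0$}.

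Next I would introduce $G(v):=\int_{0}^{v}\bigl(cg(s)-f(s)\bigr)\de s$ and the energy $H(t):=\phi(t)+G(u(t))$. Using \eqref{eq:E} one computes $H'(t)=-\rho(u(t))\leq0$, so $H$ is non-increasing on $(a,b)$; by \eqref{eq:as}--\eqref{eq:limsh} its endpoint values are $H(a^{+})=G(1)=\int_{0}^{1}(cg-f)\,\de s$ and $H(b^{-})=G(0)=0$, and $H(a^{+})>0$ by Lemma \ref{prop:segno int}. This monotone quantity is what forbids $u$ from returning to an equilibrium in the interior: if $u(t_{1})=1$ at an interior local maximum, then $\phi(t_{1})=0$ forces $H(t_{1})=G(1)=H(a^{+})$, so $H$ is constant and $\rho(u)\equiv0$ on $(a,t_{1})$, whence $u\equiv1$ there, contradicting that $u$ assumes intermediate values; the level $0$ is treated symmetrically using $H(b^{-})=0$. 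Combined with the piecewise monotonicity, this will yield that $I_{u}$ is a single interval, reducing the whole statement to the exclusion of interior critical points.

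For that final step I would scan $I_{u}$ from the left: since $u(a^{+})=1$ and $u\leq1$, the first monotone arc is decreasing, so the first critical point is necessarily a local minimum; symmetrically, since $u(b^{-})=0$ and $u\geq0$, the last one is a local maximum. I expect this to be the main obstacle, and it is genuinely delicate for two reasons. First, the local dichotomy above \emph{permits} a minimum wherever $D(u)<0$ and a maximum wherever $D(u)>0$, so the sign tests alone do not close the argument; the exclusion of the resulting oscillatory pattern must be global, and this is exactly where assumption \eqref{ip:Dfinite} (finitely many zeros of $D$) has to be exploited, tracking the sign of $\phi$ along the successive monotone arcs together with the ordering of the turning values forced by the strict decrease of $H$ and by $H(a^{+})>0=H(b^{-})$, and keeping in mind that $u$ may cross the same zero of $D$ several times. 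Second, one must separately handle \emph{degenerate} critical points at which $u(t_{0})\in D_{0}$, where the $u''$ argument is unavailable and only the robust crossing $\phi'(t_{0})=-\rho(u(t_{0}))<0$ survives. Once these cases are disposed of and no critical point remains in $I_{u}$, the derivative $u'$ keeps a constant sign on the interval $I_{u}$, which must be negative because $u$ runs from $1$ down to $0$, giving $u'(t)<0$ for every $t\in I_{u}$.
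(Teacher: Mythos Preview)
Your framework is sound: the flux $\phi=D(u)u'$ and the energy $H=\phi+G(u)$ with $H'=-\rho(u)\le 0$ are exactly the right objects, and the paper's argument can be rephrased in this language. However, the step you yourself flag as ``the main obstacle''---ruling out interior critical points in $I_u$---is not actually carried out; you only announce that finiteness of $D_0$ ``has to be exploited'' via some sign-tracking of $\phi$ and an ordering of turning values coming from $H$, without producing a contradiction. This is a genuine gap, and your diagnosis is in fact misleading: the paper's proof of this proposition uses \emph{neither} the finiteness of $D_0$ nor any global ordering of turning values.

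The missing ingredient (the paper's Claim~3) is that every critical point $t_0\in I_u$ must satisfy $D(u(t_0))=0$. Suppose instead $D(u(t_0))>0$; by your max/min dichotomy $t_0$ is a strict local maximum, and since $u(a^+)=1>u(t_0)$ there is an earlier minimum $\tau_0<t_0$ with $u(\tau_0)<u(t_0)$ and, by the same dichotomy, $D(u(\tau_0))\le 0$. The intermediate value theorem then gives $u^\ast\in[u(\tau_0),u(t_0))$ with $D(u^\ast)=0$; choose $t^\ast\in[\tau_0,t_0)$ with $u(t^\ast)=u^\ast$ and, using $u(b^-)=0$, a later $T^\ast>t_0$ with $u(T^\ast)=u^\ast$. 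Then $\phi(t^\ast)=\phi(T^\ast)=0$ and $G(u(t^\ast))=G(u(T^\ast))$, so in your notation $H(t^\ast)=H(T^\ast)$, contradicting the strict decrease of $H$ on $I_u$. With Claim~3 in hand, the paper disposes of any arc where $u'>0$ (its Claim~4, via the same integration trick and Lemma~\ref{prop:segno int}), and finally excludes an isolated zero of $u'$ at a point with $D(u(t_0))=0$ by computing $\phi'(t_0)$ two ways: the equation gives $-\rho(u(t_0))<0$, while writing $\phi(t)$ as a triple product of difference quotients and passing to the limit gives $\dot D(u(t_0))\cdot u'(t_0)^2=0$. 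This last computation is precisely the resolution of your ``degenerate'' case.

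Two minor remarks. Your identity $D(u(t_0))u''(t_0)=-\rho(u(t_0))$ assumes $u\in C^2$, which is not part of Definition~\ref{d:tws}; the robust version is your own $\phi'(t_0)<0$, which suffices. And your energy $H$ does streamline both the treatment of interior equilibria and the integration trick above (it turns the paper's integral computations into the single statement ``$H$ cannot repeat a value on $I_u$''); what it does \emph{not} do by itself is locate the point $u^\ast\in D_0$, which is the actual crux.
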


\begin{proof}
We divide the proof into some steps.

\bg\nd
{\em Claim 1}:\ if \(u(t_0)=0\) for some \(t_0\in (a,b)\), then \(u(t)=0\) for every \(t\in (t_0,b)\); similarly,  if \(u(t_0)=1\) for some \(t_1\in (a,b)\), then \(u(t)=1\) for every \(t\in (a,t_1)\).

\md
Indeed, if there exists some $t_{0}\in\left(a,b\right)$ such that
$u\left(t_{0}\right)=0$ then $u^{\prime}\left(t_{0}\right)=0$ and
so, integrating (\ref{eq:E}) in \((t_0,b)\) we get
\[
\int_{t_{0}}^{b}\left(D\left(u(t)\right)u^{\prime}(t)\right)^{\prime}\de t+\int_{t_{0}}^{b}\left[cg\left(u(t)\right)-f\left(u(t)\right)\right]u^{\prime}(t)\de t+\int_{t_{0}}^{b}\rho\left(u(t)\right)\de t=0.
\]
Due to conditions \eqref{eq:as} and \eqref{eq:limsh},  the first and the second integral are null. So, since \(\rho\) is positive in \((0,1)\), we derive \(u(t)=0\) for every \(t\in (t_0,b)\). The second  statement of the claim is analogous.
 
\bg
{\em Claim 2}: 
\ if $0<u(t_0)<1$, \ $u'(t_0)=0\) and \(D(u(t_0))>0\) [resp. \(D(u(t_0))<0\)], then \(t_0\) is a point of proper local maximum [resp. minimum]. for the function \(u\).
\md

Let \(t_0\) be such that  $0<u(t_0)<1$, \ $u'(t_0)=0\) and \(D(u(t_0))>0\) (the
case $D\left(u\left(t_{0}\right)\right)<0$ is analogous).  From equation
(\ref{eq:E}) we get
\[
\left.\left(D\left(u(t)\right)u^{\prime}(t)\right)^{\prime}\right|_{t=t_{0}}=-\rho\left(u\left(t_{0}\right)\right)<0
\]
so the function $\left(D\circ u\right)u^{\prime}$ is strictly decreasing in
a neighborhood of $t_{0}$ and it vanishes at $t_{0}$. From the sign
of $D$ we deduce that $t_{0}$ is a proper local maximum point for
the function $u$.

\bg
{\em Claim 3}: 
\ if $0<u(t)<1$ and $u'(t)=0\), then  $D\left(u\left(t\right)\right)=0$.

\md
Assume, by contradiction, that for some \(t_0\) we have \(0<u(t_0)<1\), \(u'(t_0)=0\) and \(D(u(t_0))>0\). By Claim 2 the function \(u\) has a proper maximum at \(t_0\), so   taking account of the boundary datum 
 $u(a^+)=1$, we deduce the existence of a point \(\tau_0<t_0\)  such that \(u(t)\ge u(\tau_0)\) for every \(t \le t_0\),  with  \(u(\tau_0)<u(t_0)\). Again by Claim 2 we infer \(D(u(\tau_0))\le 0\). Hence, by the continuity of the function \(D\) in \([u(\tau_0),u(t_0)]\), a value \(u^*\) exists in \([u(\tau_0),u(t_0))\) such that \(D(u^*)=0\). Moreover, by the continuity of the function \(u\)
in \([\tau_0,t_0]\) there exists a point \(t^*\in [\tau_0,t_0)\) such that 
\(u(t^*)=u^*<u(t_0)\). Finally, taking account of the boundary datum \(u(b^-)=0\) we get the existence of a point \(T^*>t_0\) such that \(u(T^*)=u(t^*)=u^*\). Therefore,  
integrating (\ref{eq:E}) in $[t^*,T^*]$ we
get
\[\begin{array}{ll}
0 & =\dis\int_{t^*}^{T^*}(D\left(u(t)\right)u^{\prime}(t))'\de t+\dis \int_{t^*}^{T^*}\!\! [cg\left(u(t)\right)-f\left(u(t)\right)]u^{\prime}(t)\de t+\int_{t^*}^{T^*} \!\! \rho\left(u(t)\right)\de t \\ & = \dis\int_{t^*}^{T^*}\!\! \rho\left(u(t)\right)\de t>0,\end{array}
\]
a contradiction.

%
%
%
%

\bg
{\em Claim 4}: $u^{\prime}\left(t\right)\leq0 \) for every \(  t\in\left(a,b\right)\).

\md
Assume by contradiction that $u^{\prime}\left(t_{0}\right)>0$
for some $t_{0}\in\left(a,b\right)$. Let $\left(t_{1},t_{2}\right)$
be the largest interval containing $t_{0}$ such that $u^{\prime}\left(t\right)>0$
for every $t\in\left(t_{1},t_{2}\right)$. Of course, \(a<t_1<t_2<b\), hence 
\(u'(t_1)=u'(t_2)=0\). Let us now prove that \(u(t_1)=0\) and \(u(t_2)=1\). Indeed, if \(u(t_1)>0\), by virtue of Claim 3 we get \(D(u(t_1))=0\); moreover, by the boundary datum \(u(b^+)=0\) we get the existence of a point \(\tau_1>t_1\) such that \(u(\tau_1)=u(t_1)\). Therefore, integrating equation \eqref{eq:E} in \([t_1,\tau_1]\) we again obtain 
the contradiction \(\dis\int_{t_1}^{\tau_1} \rho(u(t))  \de t =0\). Then, necessarily \(u(t_1)=0\). Similarly we can prove that \(u(t_2)=1\).

\md
 Finally, integrayting again equation \eqref{eq:E} in  \([t_1,t_2]\) we get
 \[ 0=\int_{t_1}^{t_2} [cg(u(t))-f(u(t))]u'(t) \de t + \int_{t_1}^{t_2} \rho(u(t)) \de t = \int_0^1 [cg(s)-f(s)] \de s + \int_{t_1}^{t_2} \rho(u(t)) \de t,\]
which is a contradiction by \eqref{eq:intpos}, since both the last two integrals are positive. So Claim 4 is proved and \(u\) is a decreasing function.

\bg
{\em Claim 5}: \ \(I_u\) is an interval and \(u\) is strictly decreasing 
in \(I_u\).

\md
By Claim 4 it is immediate to deduce that \(I_u\) is an open interval. Moreover, if there exists an interval \([t_1,t_2]\subset I_u\) in which \(u\) is constant,  then  integrating equation \eqref{eq:E} we obtain 
\[\int_{t_1}^{t_2} \rho(u(t)) \ \de t =0\]
which is a contradiction since \(\rho\) is positive on \((0,1)\).

\bg
{\em Claim 6}: \ \(u'(t)<0\) for every \(t\in I_u\).

\md
Assume, by contradiction, \(u'(t_0)=0\) for some \(t\in I_u\).
Hence, by Claim 3, we have \(D(u(t_0))=0\).
 Put \(\psi(t):=D(u(t))u'(t)\). By equation \eqref{eq:E} we derive
\( \psi'(t_0)=\rho(u(t_0))<0\). On the other hand, 
\[ \frac{\psi(t)-\psi(t_0)}{t-t_0}=  \frac{\psi(t)}{t-t_0}=\frac{D(u(t))-D(u(t_0))}{u(t)-u(t_0)} \frac{ u(t)-u(t_0)}{t-t_0}u'(t)\]
so, passing to the limit as \(t\to t_0\) we infer \(\psi'(t_0)=0\), a contradiction.

\end{proof}

\bg
In view of Proposition \ref{prop:monotone u}, the following cases can occur:

\begin{itemize}
\item \ \((a,b)=\RR\); in this case we have a classical solution in \(C^1(\RR)\). The solution can be strictly decreasing in \(\RR\), hence the equilibria \(0\) and \(1\) are not reached in a finite time, or the solution can assume the value \(0\) and/or \(1\) at some \(t_0\in \RR\) and hence it is constant in a half-line.  The latter case does not occur when the Cauchy problem for equation  \eqref{eq:E} with initial conditions \(u(t_0)=u'(t_0)=0\) or \(u(t_0)=1, u'(t_0)=0\)  has a unique solution.

\item \ \((a,b)\) is a half-line; in this case we have a so-called sharp solution of type 1 or type 2. One of the equilibria is reached at a finite time and \(u\) does not admit a \(C^1-\)continuation. 

\item \ \((a,b)\) is a bounded interval; in this case we have a so-called sharp t.w.s. of type 3. Both the equilibria are reached at finite times, and \(u\) does not admit a \(C^1-\)continuation.

\end{itemize}

\bg

\medskip
\begin{center}
\includegraphics[scale=0.65]{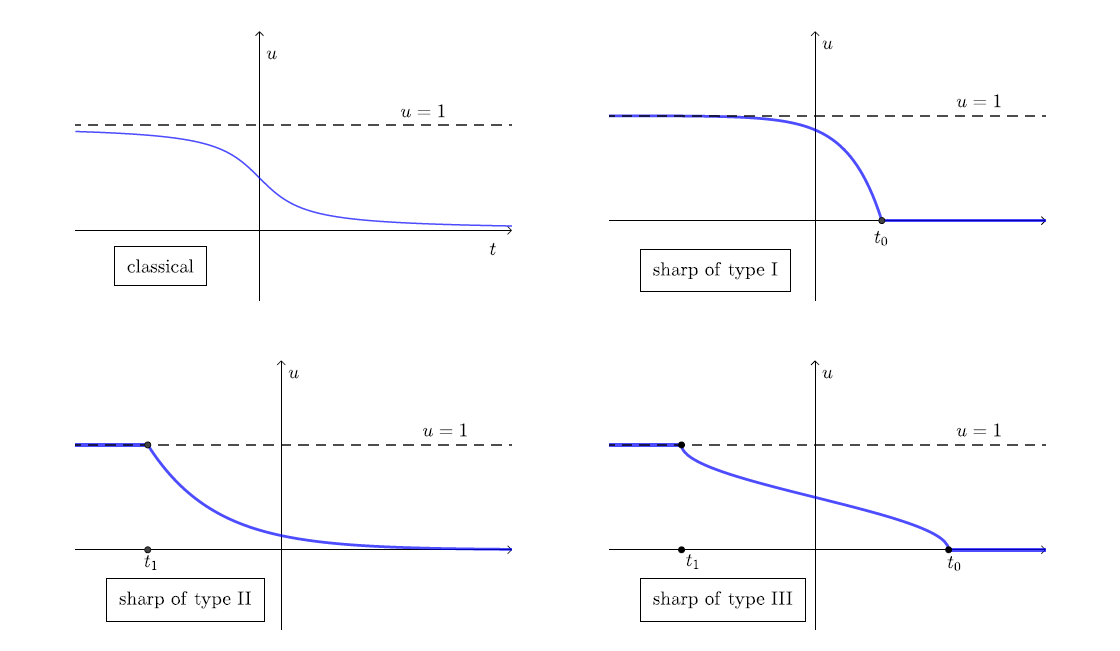}
\end{center}

\bg
We will delve deeper into this classification in the last section of the paper.

\bg
Finally, observe that if \(u\) is a t.w.s, then each traslation \(u(t-t_0)\) is a t.w.s. too; so the t.w.s. are defined up to shifts.

\section{\textcolor{black}{Reduction to a first order singular problem}}

Thanks to the monotonicity property proved in Proposition \ref{prop:monotone u} we now introduce a suitable singular boundary value problem (b.v.p.) of the first order, whose solvability is equivalent to the existence of t.w.s. for equation \eqref{eq:E}.

\md
More in detail, consider the following b.v.p. (here the dot denotes differentiation with respect to \(u\))

\beq
\begin{cases}
\overset{\cdot}{z}\left(u\right)=f(u)-cg\left(u\right)-\dfrac{\rho(u)D(u)}{z\left(u\right)}, & u\in(0,1)\setminus D_{0}\\
z\left(u\right)D\left(u\right)<0, & u\in(0,1)\setminus D_{0}\\
z(0^+)=z(1^-)=0.
%
\end{cases}\label{eq:ordine1}
\eeq

\bg

For {\em solution } of \eqref{eq:ordine1} we mean a continuous function \(z\) defined in \((0,1)\), such that \(z\in C^1((0,1)\setminus D_0)\) and \(z\) satisfies the  equalities and inequalities given in \eqref{eq:ordine1}. Of course, any solution \(z\) vanishes exactly at the points of \(D_0\).

 We now prove the equivalence between the existence of t.w.s. for equation \eqref{eq:E} and the existence of solutions of problem \eqref{eq:ordine1}.

\bg
\begin{thm} 
\label{t:equiv}
If \(u\) is a t.w.s. of equation \eqref{eq:E}, then the function \(z(u):= D(u)u'(t(u))\)  is a solution of problem \eqref{eq:ordine1}, where \(u\mapsto t(u)\) denotes the inverse function of \(u\), defined on \((0,1)\).  Moreover, the function \(z(u)/D(u)\) defined in \((0,1)\setminus D_0\) admits a continuous extension \(\phi\) defined in \((0,1)\).

Vice versa, if \(z\in C(0,1)\) is a solution of \eqref{eq:ordine1}, such that the function \( z(u)/D(u)\),  \(u\in (0,1)\setminus D_0\), admits a continuous extension \(\phi\) defined in \((0,1)\), then fixed a value \(u^*\not\in D_0\), the Cauchy problem
\beq
\begin{cases}
u'=\phi(u) \\ u(0)=u^*
\end{cases} \label{eq:prob-u}
\eeq
admits a solution \(u\), such that in its maximal existence interval \((a,b)\) it
is a t.w.s. of \eqref{eq:E}.
\end{thm}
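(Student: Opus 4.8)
The plan is to prove the two implications separately: the forward one by the change of variables that turns the second-order equation into the first-order one, and the converse by solving the autonomous Cauchy problem and reading \eqref{eq:E} back off. For the forward implication I would start from a t.w.s. $u$ on $(a,b)$. By Proposition \ref{prop:monotone u} the restriction of $u$ to $I_u$ is strictly decreasing, hence invertible, so its inverse $t(\cdot)$ is well defined and $C^1$ on $(0,1)$ with $t'(u)=1/u'(t(u))$. Writing $z(u):=D(u)\,u'(t(u))=w(t(u))$ with $w(t):=D(u(t))u'(t)$, I would differentiate in $u$, obtaining $\dot z(u)=w'(t(u))/u'(t(u))$, and substitute $w'=(D(u)u')'=-[cg(u)-f(u)]u'-\rho(u)$ from \eqref{eq:E}; after dividing by $u'(t(u))=z(u)/D(u)$ the reaction term produces exactly $-\rho(u)D(u)/z(u)$, which is the first line of \eqref{eq:ordine1} on $(0,1)\setminus D_0$. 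The sign condition $z(u)D(u)=D(u)^2 u'(t(u))<0$ is immediate from $u'<0$ and $D(u)\neq 0$ off $D_0$, and the boundary conditions $z(0^+)=z(1^-)=0$ follow from \eqref{eq:limsh} once one notes $t(u)\to b^-$ as $u\to 0^+$ and $t(u)\to a^+$ as $u\to 1^-$. Finally $z(u)/D(u)=u'(t(u))$ on $(0,1)\setminus D_0$, and the right-hand side is continuous and strictly negative on all of $(0,1)$ by Proposition \ref{prop:monotone u}, so $\phi(u):=u'(t(u))$ is the required continuous extension.

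For the converse, given $z$ and its extension $\phi$, I would first record that $\phi=z/D<0$ on $(0,1)\setminus D_0$, hence $\phi\le 0$ on $(0,1)$ by continuity. Since $u^*\notin D_0$ we have $\phi(u^*)<0$, so Peano's theorem gives a local solution of \eqref{eq:prob-u}, which I continue to its maximal interval $(a,b)$; being non-increasing, $u$ is monotone and invertible on $I_u$. The crucial identity is $D(u(t))\,u'(t)=z(u(t))$ for \emph{every} $t$: off $D_0$ it is simply $u'=\phi=z/D$, and where $u(t)\in D_0$ both sides vanish because $D$ and $z$ vanish there simultaneously. Differentiating $z(u(t))$ where $u\notin D_0$ and inserting the equation for $\dot z$ from \eqref{eq:ordine1}, the singular term $\rho D/z$ multiplied by $u'=z/D$ collapses to $\rho(u)$, giving $\frac{d}{dt}\bigl(D(u)u'\bigr)=[f(u)-cg(u)]u'-\rho(u)$, which is precisely \eqref{eq:E}.

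It remains to promote this to a genuine t.w.s., and here I expect the real work to lie. Because $u$ is strictly monotone on $I_u$ and $D_0$ is finite, the set $\{t:u(t)\in D_0\}$ is finite, so \eqref{eq:E} holds off finitely many points; to get $D(u)u'\in C^1(a,b)$ I would show that $\frac{d}{dt}z(u(t))$ has a finite limit at each exceptional point — the expression $[f(u)-cg(u)]u'-\rho(u)$ extends continuously since $\phi$ is continuous — and invoke the standard fact that a continuous function whose derivative admits a finite limit at a point is differentiable there with that value; this simultaneously extends \eqref{eq:E} to those points by continuity. The boundary behaviour $u(a^+)=1$, $u(b^-)=0$ and \eqref{eq:limsh} would then follow from $z(u(t))=D(u)u'\to 0$ at the endpoints together with $z(0^+)=z(1^-)=0$. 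The main obstacle, and the genuinely new difficulty with respect to \cite{CaMaPa}, is exactly the analysis at the zeros of $D$: one must control the indeterminate quotient $\rho D/z$ (of the form $0/0$) so as to secure the $C^1$-matching of $D(u)u'$ across the sign changes of $D$, and rule out that the trajectory of $u'=\phi(u)$ stalls at an interior zero $u_0\in D_0$ with $\phi(u_0)=0$, which would otherwise prevent $u$ from attaining the equilibria $0$ and $1$.
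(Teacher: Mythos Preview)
Your forward implication is the paper's argument essentially verbatim, and it is complete.

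For the converse you follow the same route as the paper and you correctly isolate the two real difficulties, but you leave both open. First, the claim that $u$ is ``monotone and invertible on $I_u$'' (and hence that $\{t:u(t)\in D_0\}$ is finite) is unjustified: since $\phi$ may vanish at points of $D_0$, Peano's theorem can return a solution that sits at some $u_0\in D_0$ on a whole interval. The paper disposes of this by a cutting argument: if $u\equiv u_0\in D_0$ on $[c,d]$, then the splice $\tilde u(t)=u(t)$ for $t\le c$ and $\tilde u(t)=u(t-c+d)$ for $t\ge c$ is again a $C^1$ solution of \eqref{eq:prob-u}; since $D_0$ is finite one removes the finitely many plateaus and may assume from the outset that $u$ is strictly decreasing.

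Second, your proposed deduction of $u(a^+)=1$, $u(b^-)=0$ from ``$z(u(t))\to 0$ together with $z(0^+)=z(1^-)=0$'' is circular, and you yourself flag the asymptotic stalling as unresolved. The paper's argument is by contradiction: if $u(b^-)=u_0\in(0,1)$ then $b=+\infty$ and $\phi(u_0)=0$; the sign condition in \eqref{eq:ordine1} forces $u_0\in D_0$, hence $z(u_0)=0$ and $D(u(t))u'(t)=z(u(t))\to 0$. But you have already established \eqref{eq:E} on $(a,b)$, so $(D(u)u')'\to -\rho(u_0)<0$ as $t\to+\infty$, which drives $D(u)u'\to-\infty$, a contradiction. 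With these two ingredients supplied, your sketch becomes the paper's proof; your handling of the $C^1$-matching of $D(u)u'$ across the finitely many preimages of $D_0$ is exactly what the paper does.
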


\begin{proof}
Let \(u\) be a t.w.s. to equation \eqref{eq:E} in \((a,b)\), and put \(I_u:=\{t : \ 0<u(t)<1\}\). Let \(u\mapsto t(u)\) be the inverse function, defined in \((0,1)\) and taking valued in \(I_u\), whose existence is guaranteed by Proposition \ref{prop:monotone u}. Notice that \(u\mapsto t(u)\) actually is a \(C^1-\)function, since \(u\) is \(C^1\) with \(u'(t)\ne 0\) for every  \(t\in I_u\).
Put \(\psi(t):= D(u(t))u'(t)\), define \(z(u):= \psi(t(u))=D(u)u'(t(u))\).
Hence \(z\) is a \(C^1-\)function defined in \((0,1)\), satisfying \(z(0^+)=z(1^-)=0\) by conditions \eqref{eq:limsh}.
Moreover, for every \(u\in (0,1)\setminus D_0\) we have
\[ \dot{z} (u)= \dfrac{\psi'(t(u))}{u'(t(u))} = f(u)-cg(u)-\dfrac{\rho(u)}{u'(t(u))} = f(u)-cg(u)-\dfrac{\rho(u)D(u)}{z(u)}.\] 

Furthermore, since \(u'(t(u))<0\) for every \(u\in (0,1)\), we  
get  \(z(u)D(u)<0\) for every \(u\in (0,1)\setminus D_0\).

 Finally, since \(z(u)/D(u)=u'(t(u))\) for every \(u\in (0,1)\setminus D_0\), we have that the function \(\phi(u):=u'(t(u))\) is a continuous extension in \((0,1)\).

\md
Vice versa, let \(z\in C(0,1)\) be a solution of \eqref{eq:ordine1} such that the function \(z/D\) admits a continuous extension \(\phi\) defined  in \((0,1)\).
Then, problem \eqref{eq:prob-u} admits at least a solution \(u\), defined in its maximal existence interval \((a,b)\), with \(-\infty\le a <b\le +\infty\). 

First of all, notice that \(u\) is a decreasing function, since \(\phi(u)\le 0\) for every \(u\in (0,1)\). Moreover, we have \(u'(t)<0\) whenever \(u(t)\not \in D_0\).
So, if \(u\) is constant in some interval \([c,d]\subset (a,b)\), then \(u(t)\in D_0\) for every \(t\in [c,d]\) and it is immediate to verity that  the cut function
\[ \tilde u(t):=\begin{cases} u(t), \quad t\in (a,c]  \\ u(t-c+d), \quad t\in[c,c+b- d) \end{cases}\]
is again a \(C^1\) solution of problem \eqref{eq:prob-u}.
Since \(D_0\) is finite, there exists  at the most a finite number of intervals 
in which \(u\) is constant and so we deduce that problem \eqref{eq:prob-u} admits at least a strictly decreasing solution. 
Therefore, we can assume, without restriction, that the solution \(u\) is strictly decreasing, so that put \(T_0:=\{t : \ u(t)\in D_0\}\), we have that \(T_0\) is finite and \((a,b)\setminus T_0\) is disjoint union of a finite number of open intervals (bounded or unbounded), say \((a,b)\setminus T_0=\dis \bigcup_{j=1}^m (a_j,b_j)\).

Notice that the function  \(t\mapsto D( u(t))u'(t)\) is \(C^1\) in each interval \((a_j,b_j)\), \(j=1,\cdots,m\), with 
\[ \left(D(u(t))u'(t)\right)'=(z(u(t)))'=\dot z(u(t))u'(t)=
 (f(u(t))-c g(u(t)))u'(t) - \rho(u(t))\]
so \(u\) satisfies the differential equation in \eqref{eq:prob-u} in each interval \((a_j,b_j)\), \(j=1,\cdots,m\).
On the other hand, for every \(t_0\in T_0\) we have that 
there exists the limit

\[ \lim_{t \to t_0} \left( D(u(t))u'(t))\right)'=  (f(u(t_0))-c g(u(t_0)))u'(t_0) - \rho(u(t_0))\]
and this impplies that the function \((D\circ u)u'\) is \(C^1\) in the whole interval \((a,b)\), where it satisfies the differential equation of problem \eqref{eq:prob-u}. 

\md
The boundary conditions \(z(0^+)=z(1^-)=1\) in problem \eqref{eq:ordine1} imply the validity of conditions \eqref{eq:limsh}. 

Finally, let us prove the validity of boundary confitions \eqref{eq:as}. 
Since \(u\) is monotone and \((a,b)\) is the maximal existence interval, there exists   \(u(a^+)\le 1\) and \(u(b^-)\ge 0\). 
 If  \(u(b^-)=u_0\in (0,1)\), then necessarily \(b=+\infty\). 
 Moreover, 
\( \dis\lim_{t\to +\infty} u'(t)=\lim_{u \to u_0} \phi(u)=\phi(u_0)\), hence, necessarily \(\phi(u_0)=0\). But 
\[ \lim_{t \to +\infty} (D(u(u))u'(t))' = \lim_{t \to +\infty} \dot z(u(t))u'(t)= \lim_{u \to u_0} \dot z(u)u'(t(u))= \lim_{u\to u_0}\dot z(u_0)\phi(u_0)=0\]
and this is in contrast with equation \eqref{eq:E}, since \(\rho(u_0)\ne 0\).

%

The proof concerning the value \(u(a^+)\) is analogous.

Summarizing, \(u\) satisfies all the requirements of Definition \ref{d:tws} and is a t.w.s. of  \eqref{eq:E}.
\end{proof}

\bg

We conclude this section by discussing the uniqueness of the solutions of problem  \eqref{eq:ordine1} and of the t.w.s. (up to shifts).
To this aim, we need the following preliminary lemma, which 
 is just a remark concerning a change of variable, whose proof is immediate. We will refer to it several times later in the paper.

\bg

\begin{lem}
\label{l:negative} Let \(f,g,h\) be continuous functions defined in a given interval \((\alpha,\beta)\subset (0,1)\) 
and assume that \(h(u)<0\) in \((\alpha,\beta)\) with \(h(\alpha)=h(\beta)=0\).

Then, a function \(z\in C^1(\alpha,\beta)\) is a positive solution 
of the equation
\beq \dot z(u) =f(u)-cg(u)-\dfrac{h(u)}{z(u)}\label{eq:uconh}\eeq
if and only if 
the function \(\zeta(u):=-z(\alpha+\beta-u)\) is a negative solution of the equation
\[ \dot \zeta(u) =\tilde f(u)-c\tilde g(u)-\dfrac{ \tilde h(u)}{\zeta(u)}\]
where
\[ \tilde f(u):=f(\alpha+\beta-u), \ \tilde g(u):=g(\alpha+\beta-u), \ \tilde h(u):=-h(\alpha+\beta-u)\] 
with \(\tilde h(u)>0\) in \((\alpha,\beta)\) and \(\tilde h(\alpha)=\tilde h(\beta)=0\).

\end{lem}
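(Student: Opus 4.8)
The plan is to verify the statement by the explicit reflection change of variable $u \mapsto \alpha+\beta-u$, which maps $(\alpha,\beta)$ onto itself and is an involution. First I would record the elementary properties of the reflected coefficients: since $\alpha+\beta-u \in (\alpha,\beta)$ whenever $u \in (\alpha,\beta)$, the hypothesis $h<0$ on $(\alpha,\beta)$ gives $\tilde h(u) = -h(\alpha+\beta-u) > 0$ there, while $\tilde h(\alpha) = -h(\beta) = 0$ and $\tilde h(\beta) = -h(\alpha) = 0$, which establishes the claimed sign and boundary behaviour of $\tilde h$. Likewise $\tilde f, \tilde g$ are continuous on $(\alpha,\beta)$ as compositions of continuous functions with an affine map.

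The core of the argument is a one-line differentiation. Setting $\zeta(u) := -z(\alpha+\beta-u)$, the chain rule yields $\dot\zeta(u) = \dot z(\alpha+\beta-u)$ (the two minus signs cancel), so $\zeta \in C^1(\alpha,\beta)$ precisely when $z \in C^1(\alpha,\beta)$, and $\zeta(u) < 0$ exactly when $z(\alpha+\beta-u) > 0$. Suppose now $z$ is a positive solution of \eqref{eq:uconh}. Evaluating that equation at the point $\alpha+\beta-u$ and using $\dot\zeta(u) = \dot z(\alpha+\beta-u)$ gives
\[ \dot\zeta(u) = f(\alpha+\beta-u) - cg(\alpha+\beta-u) - \frac{h(\alpha+\beta-u)}{z(\alpha+\beta-u)}. \]
Substituting the definitions $f(\alpha+\beta-u) = \tilde f(u)$, $g(\alpha+\beta-u) = \tilde g(u)$, $h(\alpha+\beta-u) = -\tilde h(u)$ and $z(\alpha+\beta-u) = -\zeta(u)$ into the singular term collapses the two extra minus signs to $+\tilde h(u)/\zeta(u)$, so that $\zeta$ solves the reflected equation and is negative, as required.

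For the converse I would exploit that the transformation is an involution: one has $-\zeta(\alpha+\beta-u) = z(u)$, and the reflected triple $(\tilde f,\tilde g,\tilde h)$ reflects back to $(f,g,h)$, with $\tilde h$ now playing the role of the positive kernel whose negative is the original $h$. Hence applying the identical computation to $\zeta$ recovers $z$ as a positive solution of \eqref{eq:uconh}, and no separate argument is needed.

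There is no genuine obstacle here, the statement being a bookkeeping identity. The only point demanding care is the sign accounting in the singular term $h/z$: the minus sign built into $\tilde h = -h(\alpha+\beta-u)$ and the minus sign built into $\zeta = -z(\alpha+\beta-u)$ must be tracked simultaneously, so that their quotient reproduces $\tilde h/\zeta$ with the correct overall sign in front. Once this is confirmed, the equivalence, including the matching of positivity of $z$ with negativity of $\zeta$ and the $C^1$ regularity, is immediate.
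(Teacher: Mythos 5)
Your proof is correct: the chain-rule computation, the sign bookkeeping in the singular term, and the involution argument for the converse are all accurate. The paper itself gives no written proof (it declares the lemma ``immediate''), and your explicit verification is precisely the intended direct change-of-variable argument, so there is nothing to compare beyond that.
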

\bg

\begin{prop}
\label{p:unique}
For every fixed \(c\in \RR\) problem \eqref{eq:ordine1} can admit, at the most, one solution.
Moreover, foe every fixed \(c\in \RR\) equation \eqref{eq:E} can admit, at the most, one t.w.s. (up to shifts).
\end{prop}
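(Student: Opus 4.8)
The plan is to prove the two assertions in turn, obtaining the uniqueness of the t.w.s. from that of the first-order problem through the equivalence in Theorem \ref{t:equiv}. For the first assertion, fix $c$ and suppose $z_1,z_2$ are two solutions of \eqref{eq:ordine1}. Write $D_0=\{u_1<\cdots<u_n\}$ and set $c_0:=0$, $c_{n+1}:=1$, $c_j:=u_j$, so that on each open subinterval $J_i:=(c_i,c_{i+1})$ the diffusivity $D$ has constant sign and hence, by the constraint $zD<0$, both $z_1$ and $z_2$ have the same constant sign on $J_i$ and never vanish there. Since any solution vanishes exactly on $D_0$ and $z(0^+)=z(1^-)=0$, the difference $w:=z_1-z_2$ extends continuously to $\overline{J_i}$ with vanishing values at both endpoints of $J_i$. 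The key observation is that on $J_i$ this difference solves a linear homogeneous equation: subtracting the two copies of the ODE yields
\[
\dot w(u)=-\rho(u)D(u)\Big(\frac{1}{z_1(u)}-\frac{1}{z_2(u)}\Big)=\frac{\rho(u)D(u)}{z_1(u)z_2(u)}\,w(u)=:a(u)\,w(u),
\]
where $a$ is continuous on $J_i$ and, crucially, nowhere zero there, because $\rho>0$, $D\neq 0$, and $z_1z_2>0$ on $J_i$.

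I would then argue by contradiction. If $w\not\equiv 0$ on $J_i$, then, being continuous on the compact closure with vanishing endpoint values, $|w|$ attains a strictly positive maximum at some interior point $u^*\in J_i$. At such an extremum $\dot w(u^*)=0$, whereas the relation above forces $\dot w(u^*)=a(u^*)\,w(u^*)\neq 0$, a contradiction. Hence $w\equiv 0$ on each $J_i$, and by continuity $z_1\equiv z_2$ on $(0,1)$, which proves uniqueness for \eqref{eq:ordine1}. (The reflection provided by Lemma \ref{l:negative} could alternatively be used to reduce every subinterval to the case of positive solutions, but the extremum argument above is insensitive to the sign of $z$.)

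For the second assertion, let $u_1,u_2$ be two t.w.s. of \eqref{eq:E}. By Theorem \ref{t:equiv} each produces a solution $z_i(u)=D(u)\,u_i'(t_i(u))$ of \eqref{eq:ordine1}, and by the first part $z_1\equiv z_2=:z$; consequently the continuous extensions $\phi_i$ of $z_i/D$ coincide with a single function $\phi$. By Proposition \ref{prop:monotone u} each $u_i$ is strictly decreasing on $I_{u_i}$, whence $\phi(u)=u_i'(t_i(u))<0$ for every $u\in(0,1)$. Thus both $u_1$ and $u_2$ solve the same autonomous equation $u'=\phi(u)$ with $\phi$ continuous and strictly negative on $(0,1)$, so their inverse functions satisfy
\[
t_i(u)=\kappa_i+\int_{u^*}^{u}\frac{\de s}{\phi(s)},
\]
for suitable constants $\kappa_i$, the integral being well defined precisely because $\phi$ is continuous and never zero on $(0,1)$. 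Hence $t_1$ and $t_2$ differ only by the additive constant $\kappa_1-\kappa_2$, i.e.\ $u_1$ and $u_2$ coincide up to a translation of the independent variable.

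I expect the main obstacle to be the singular behaviour at the endpoints of each $J_i$, where $z_1,z_2\to 0$ and the coefficient $a=\rho D/(z_1z_2)$ blows up, which prevents a naive Gr\"onwall estimate carried all the way to the boundary; the interior-extremum argument is exactly what sidesteps this, since it evaluates $\dot w=a\,w$ only at an interior point where $a$ is finite and nonzero. A secondary point requiring care in the second part is the passage through the interior zeros of $D$: although $\phi$ may fail to be Lipschitz (or even differentiable) at points of $D_0$, its strict negativity on all of $(0,1)$ keeps the separation-of-variables integral convergent across those points, so no ambiguity in the profile is introduced there.
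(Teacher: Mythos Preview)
Your argument is correct. The second part (uniqueness of the t.w.s.) follows the same line as the paper: both of you pass through Theorem \ref{t:equiv} to identify $z_1=z_2$, then show the inverse functions $t_i$ differ by an additive constant. The paper differentiates $u_i(t_i(u))\equiv u$ to get $\dot t_1=\dot t_2$ on $(0,1)\setminus D_0$ and concludes by continuity; you integrate $\dot t_i=1/\phi$ directly, which is the same computation.

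The first part, however, is genuinely different from the paper. The paper does not prove uniqueness on each subinterval from scratch: it observes that on any $(\alpha_k,\beta_k)$ with $D>0$ the problem reduces to the setting of \cite[Proposition 13]{CaMaPa} and quotes that result, handling the $D<0$ intervals by the reflection in Lemma \ref{l:negative}. Your approach is more elementary and self-contained: you subtract the two equations to get $\dot w=a\,w$ with $a=\rho D/(z_1z_2)$, note that $w$ vanishes at both ends of each $J_i$, and derive a contradiction at an interior maximum of $|w|$. This avoids any appeal to the external reference and is insensitive to the sign of $D$, so Lemma \ref{l:negative} is not needed. The trade-off is that the paper's route is shorter given the cited machinery, while yours would work even if \cite{CaMaPa} were unavailable. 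Your remark that a Gr\"onwall estimate would fail at the endpoints because $a$ blows up there, and that the interior-extremum device is precisely what circumvents this, is exactly the point.
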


\begin{proof}
Assume, by contradition, that for some \(c\in \RR\) there exists   a pair \(z_1, z_2\) of different solutions of problem \eqref{eq:ordine1}. Since \(z_1(u_0)=z_2(u_0)\) for every \(u_0\in D_0\), there exists an interval \((\alpha,\beta)\subset (0,1)\), with \(\alpha,\beta\in D_0\cup \{0,1\}\) such that \(D\) does not change sign in \((\alpha,\beta)\) and both the functions \(z_1,z_2\) are solution of the  problem

\[
\begin{cases}
\overset{\cdot}{z}\left(u\right)=f(u)-cg\left(u\right)-\dfrac{\rho(u)D(u)}{z\left(u\right)}, & u\in(\alpha,\beta)\\
z\left(u\right)D\left(u\right)<0, & u\in(\alpha,\beta)\\
z(\alpha^-)=z(\beta^+)=0
\end{cases}
\]

If \(D(u)>0\) in \((\alpha,\beta)\) this in contradiction with the uniqueness result \cite[Proposition 13]{CaMaPa}. The proof  in the case  \(D(u)<0\) in \((\alpha,\beta)\) follows from Lemma \ref{l:negative}.

\bg
Let us now assume that equation \eqref{eq:E} admits two different t.w.s. \(u_1\) defined in \((a_1,b_1)\) and \(u_2\) defined in \((a_2,b_2)\). Let \(t_1:(0,1)\to (a_1,b_1)\) and \(t_2:(0,1)\to (a_2,b_2)\) be the inverse functions of \(u_1\), \(u_2\) respectively. Since \(u_i(t_i(u))\equiv u\), \(i=1,2\), we get 
\beq
\label{eq:inverse}
u_1'(t_1(u))\cdot \dot{t_1}(u)=1 \ , \quad u_2'(t_2(u))\cdot \dot{t_2}(u)=1
 \eeq
 By Theorem \eqref{t:equiv} the functions \(z_1(u):=D(u)u_1'(t_1(u))\), \(z_2(u):=D(u)u_2'(t_2(u))\), are both solutions of problem \eqref{eq:ordine1}. So, by the uniqueness just proved, we have \(z_1(u)=z_2(u)\). Hence, for every \(u \in (0,1)\setminus D_0\) we have \(u_1'(t_1(u))=u_2'(t_2(u))\).
Therefore, by \eqref{eq:inverse} we deduce \(\dot{t_1}(u)=\dot{t_2}(u)\) for every \(u\in (0,1)\). Then, since \(D_0\) is finite, by the continuity of the inverse functions \(t_1, t_2\) we infer the exitence of a constant  \(k\in \RR\) such that 
\(t_2(u)=t_1(u)+k\) for every \(u\in (0,1)\). So, \(u_2(t)=u_1(t-k)\), that is \(u_2\) is a shift of \(u_1\).

\end{proof}

\section{Solvability of the first order singular problem}

\bg
In view of Theorem \ref{t:equiv},  we now investigate the solvability of problem \eqref{eq:ordine1}. For the sake of simplicity, along this section we consider 
a generic function \(h\in C([0,1])\), instead of the product \(\rho(u)D(u)\), such that 
put  
\[ H_0:=\{u \in [0,1]: \ h(u)=0\}\]
we have 
\beq
\label{ip:segnoh}
 \ H_0   \text{ is finite \ and \  } 0,1\in H_0.
\eeq

%

\md

We study the solvability of the following 
 boundary value problem
\beq 
\label{eq:pr-h} 
\begin{cases}
\overset{\cdot}{z}\left(u\right)=f(u)-cg\left(u\right)-\dfrac{h(u)}{z\left(u\right)}, & u\in(0, 1)\setminus H_0\\
z\left(u\right)h(u)<0, &  u\in(0, 1)\setminus H_0 \\
z(0^+)=z(1^-)=0.
%
\end{cases}
\eeq

\md

\md

The existence and non-existence of solutions of problem \eqref{eq:pr-h} has been investigated in \cite{CaMaPa} in the case \(h\) is a positive function in \((0,1)\).  

\bg
The following results summarizes some statements of Theorem 12, 14 and 15 in \cite{CaMaPa}.

\bg
\begin{thm}
\label{t:pos}
Let \(f,g,h\) be continuous functions defined in \([\alpha,\beta]\) such that \(h\) is positive on \((\alpha,\beta)\) with \(h(\alpha)=h(\beta)=0\) and differentiable at \(\alpha,\beta\).  

Suppose that 

\[
g(\alpha)>0 \quad \textit{ and } \quad \int_\alpha^u g(s) \ \de s >0 \ \textit{ for every } u\in (\alpha,\beta].
\]

Then, there exists a value \(c^*\) such that 
problem 
\[ \begin{cases}
\overset{\cdot}{z}\left(u\right)=f(u)-cg\left(u\right)-\dfrac{h(u)}{z\left(u\right)}, & u\in(\alpha, \beta)\\
z\left(u\right)<0, &  u\in(\alpha, \beta)\\
z(\alpha^+)=z(\beta^-)=0
%
\end{cases}
\]
 admits solution if and only if \(c\ge c^*\).

Moreover, for every \(c\ge c^*\) the solution \(z_c\)  is differentiable at  \(\alpha,\beta\), with
\beq
\label{eq:puntoz}
\dot z(\alpha)= \begin{cases} r_+(c,\alpha) & \text{ if } c>c^* \\ r_-(c,\alpha) & \text{ if } c=c^* \end{cases}; \qquad \dot z(\beta)=r_+(c,\beta)
\eeq
where
\beq r_\pm (c,u):=\frac12 \left(f(u)-cg(u)\pm \sqrt{(f(u)-cg(u))^2-4\dot h(u)} \right), \quad u\in \{\alpha,\beta\}.
\label{d:rpm}\eeq

Finally,  put 
\[ m_g:=\inf_{u\in (\alpha,\beta)} -\!\!\!\!\!\!\int_\alpha^u g(s)  \de s ; \quad M_f:=\sup_{u\in (\alpha,\beta)}  -\!\!\!\!\!\!\int_\alpha^u f(s)  \de s \quad M_h:= \sup_{u\in (\alpha,\beta)}  -\!\!\!\!\!\!\int_\alpha^u \frac{h(s)}{s-\alpha} \ \de s \]
we have that \(m_g>0\); \(M_f, M_h<+\infty\) and  \(c^*\) satisfies
\beq
\label{eq:cstar}
\frac{2\sqrt{\dot h(\alpha)}+f(\alpha)}{g(\alpha)}\le c^*\le \frac{2\sqrt{ M_h}+M_f}{m_g}.
\eeq

\end{thm}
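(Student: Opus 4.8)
The plan is to read the singular boundary value problem in the statement as a first-order ODE on $(\alpha,\beta)$ and to run a shooting argument in the parameter $c$, after first recording the behaviour that any solution is forced to have at the two endpoints. A negative solution $z$ corresponds, through Lemma \ref{l:negative}, to a positive solution of the analogous equation, so one may equally work in either formulation; I keep the negative $z$ to match \eqref{d:rpm}. Near $\alpha$ the quotient $h/z$ is of the form $0/0$, and inserting the ansatz $z(u)=r\,(u-\alpha)+o(u-\alpha)$ and using the differentiability of $h$ at $\alpha$ forces $r$ to solve
\[
r^{2}-\bigl(f(\alpha)-cg(\alpha)\bigr)r+\dot h(\alpha)=0,
\]
whose roots are exactly $r_\pm(c,\alpha)$ in \eqref{d:rpm}; the same analysis applies at $\beta$, and it shows that any solution is differentiable at the endpoints. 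Since $\dot h(\alpha)>0$, the two roots share the sign of their sum $f(\alpha)-cg(\alpha)$, so a negative slope (needed because $z<0$) requires $f(\alpha)-cg(\alpha)<0$ together with a nonnegative discriminant; as $g(\alpha)>0$ this is exactly $c\ge \bigl(2\sqrt{\dot h(\alpha)}+f(\alpha)\bigr)/g(\alpha)$, which already yields the lower bound in \eqref{eq:cstar}. At $\beta$ instead $\dot h(\beta)<0$, so the discriminant is automatically positive, the two roots have opposite signs, and the sign constraint $z<0$ to the left of $\beta$ selects the positive root $r_+(c,\beta)$, as claimed in \eqref{eq:puntoz}.

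For existence I would fix $c$ above the lower bound and shoot from $\beta$: there is a unique trajectory leaving the singular point $(\beta,0)$ into the region $z<0$ with detaching slope $r_+(c,\beta)$, obtained by a contraction/fixed-point argument near the endpoint (this is the content of the existence part of \cite{CaMaPa}). Continuing this trajectory to the left while $z<0$, exactly one of two things happens: either $z$ returns to $0$ at some interior point $\bar u\in(\alpha,\beta)$, in which case there is no solution for that $c$, or $z$ stays negative on all of $(\alpha,\beta)$ with $z(\alpha^+)=0$, giving a solution. Comparing the trajectory with an explicit supersolution assembled from the bounds of $f,g,h$ shows that the second alternative holds for all sufficiently large $c$, so the set $\mathcal C$ of admissible speeds is nonempty.

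Next I would prove that $\mathcal C$ is a half-line and locate its infimum. The delicate point, and \emph{the main obstacle}, is monotonicity in $c$: since $\partial_c\bigl(f-cg\bigr)=-g$ changes sign, there is no pointwise comparison, and the hypothesis $\int_\alpha^u g>0$ for all $u$ is precisely what replaces it. The idea is to show that the shooting trajectory from $\beta$ is ordered monotonically in $c$ once the $g$-contribution is integrated, so that increasing $c$ cannot destroy a solution; hence $c\in\mathcal C$ and $c'>c$ imply $c'\in\mathcal C$. Setting $c^*:=\inf\mathcal C$, the previous step gives $c^*\ge \bigl(2\sqrt{\dot h(\alpha)}+f(\alpha)\bigr)/g(\alpha)$. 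Closedness, $c^*\in\mathcal C$, follows by a limiting argument: for $c_n\downarrow c^*$ the solutions $z_{c_n}$ are monotone and uniformly bounded, hence converge, locally uniformly on $(\alpha,\beta)$ together with their derivatives, to a limit $z_{c^*}$ which still satisfies the equation, the sign condition, and the boundary conditions. This establishes solvability for exactly $c\ge c^*$.

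Finally, for the quantitative upper bound I would exhibit an explicit comparison function for $c=\bigl(2\sqrt{M_h}+M_f\bigr)/m_g$ built from the averaged quantities $m_g,M_f,M_h$ and verify it is a sub/supersolution of the correct type, forcing $c^*\le\bigl(2\sqrt{M_h}+M_f\bigr)/m_g$; here one checks separately that $m_g>0$ (the average of $g$ tends to $g(\alpha)>0$ as $u\to\alpha^+$ and stays positive by hypothesis) and that $M_f,M_h<+\infty$ (with $f$ bounded and $h(s)/(s-\alpha)\to\dot h(\alpha)$ integrable near $\alpha$). The remaining claim in \eqref{eq:puntoz}, the selection of $r_+(c,\alpha)$ for $c>c^*$ and of $r_-(c,\alpha)$ at $c=c^*$, is the subtlest part: near the singular point $(\alpha,0)$ the two negative slopes $r_-(c,\alpha)<r_+(c,\alpha)$ are the invariant directions of the linearized flow, generic incoming orbits approach along the less steep $r_+$, and $c^*$ is exactly the threshold at which the orbit coming from $\beta$ degenerates onto the exceptional orbit approaching along the steeper $r_-$. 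Proving this switch of invariant manifold, and that it occurs precisely at the minimal speed, is where most of the work lies and is tightly linked to the uniqueness statement of Proposition \ref{p:unique}.
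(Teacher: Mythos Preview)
Your proposal sketches a self-contained shooting argument for the singular first-order problem, whereas the paper's proof is essentially a direct invocation of Theorems~12, 14 and~15 of the authors' earlier work \cite{CaMaPa}. The only computation actually carried out in the paper is the verification that $m_g>0$ and $M_f,M_h<+\infty$ (via the same continuity/limit argument you indicate), followed by the observation that for any $c>(2\sqrt{M_h}+M_f)/m_g$ one has
\[
\inf_{u\in(\alpha,\beta)}\ -\!\!\!\!\!\!\int_\alpha^u\bigl(cg(s)-f(s)\bigr)\,\de s\ \ge\ cm_g-M_f\ >\ 2\sqrt{M_h},
\]
which is precisely hypothesis (26) of \cite[Theorem~12]{CaMaPa}; existence for such $c$ then comes for free, the threshold structure and the lower bound on $c^*$ are quoted from \cite[Theorem~14]{CaMaPa}, and the endpoint derivatives \eqref{eq:puntoz} from \cite[Theorem~15]{CaMaPa}. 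Your route is more informative---it exposes why the quadratic for $r$ governs the endpoint behaviour, why the integral positivity of $g$ substitutes for pointwise monotonicity in $c$, and why the $r_+/r_-$ switch occurs exactly at the minimal speed---but it is also considerably heavier: the shooting construction near the singular point $\beta$, the integrated comparison in $c$, and the invariant-manifold selection at $\alpha$ are exactly the technical content of \cite{CaMaPa} that the present paper is deliberately citing rather than reproving. One small correction to your endpoint analysis: you assert $\dot h(\alpha)>0$ and $\dot h(\beta)<0$, but the hypotheses only force $\dot h(\alpha)\ge 0$ and $\dot h(\beta)\le 0$; the degenerate cases (where one root of the quadratic is zero) do arise and matter later in the paper, so the argument for \eqref{eq:puntoz} must accommodate them.
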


\begin{proof} First of all, note that   \(m_g>0\) and \(M_f, M_h\) are finite.

Indeed, put \(G(u):= -\!\!\!\!\!\!\dis\int_\alpha^u g(s)  \de s\),  we have that \(G\) is a positive continuous function in \((\alpha,\beta]\) such that \(\dis\lim_{u\to \alpha^+} G(u)=g(\alpha)>0\). 
So, \(m_g>0\). Similarly,  put \(F(u):= -\!\!\!\!\!\!\dis\int_\alpha^u f(s)  \de s\),  we have that \(F\) is a  continuous function in \((\alpha,\beta]\) such that \(\dis\lim_{u\to \alpha^+} F(u)=f(\alpha)\). 
So, \(M_f\) is finite. Finally, also \(H(u):= -\!\!\!\!\!\!\dis\int_\alpha^u \frac{h(s)}{s-\alpha} \ \de s\),   is a positive  continuous function in \((\alpha,\beta]\) with  \(\dis\lim_{u\to \alpha^+} H(u)=\dot h(\alpha)\). Hence, \(M_h\) is finite too.

\md
Let us fix a positive value \(c > \dfrac{2\sqrt{M_h}+M_f}{m_g}\).  
Then, 
\[ \begin{array}{ll} \dis \inf_{u \in (\alpha,\beta) }  -\!\!\!\!\!\!\dis\int_\alpha^u (cg(s)-f(s))  \de s & \ge c  \dis\inf_{u \in (\alpha,\beta) }  -\!\!\!\!\!\!\dis\int_\alpha^u g(s) \de s + \dis \inf_{u \in (\alpha,\beta) }  -\!\!\!\!\!\!\dis\int_\alpha^u -f(s) \de s  \\ & = c  m_g - \dis\sup_{u \in (\alpha,\beta) }  -\!\!\!\!\!\!\dis\int_\alpha^u f(s) \de s \\
 & = cm_g-M_f  > 2 \sqrt{M_h} = 2
 \sqrt{\dis\sup_{u\in (\alpha,\beta)} -\!\!\!\!\!\!\dis\int_\alpha^u \dis\frac{h(s)}{s-\alpha} \ \de s\ }. \end{array}\]

Therefore, assumption (26) in \cite[Theorem 12]{CaMaPa} is satisfied and problem \eqref{eq:ordine1} admits a solution. Morever, by \cite[Theorem 14]{CaMaPa} we deduce the existence of a value \(c^*\) such that problem \eqref{eq:ordine1} admits a solution if and only if \(c\ge c^*\).

Since \(c^*\) is the minimal wave speed, by what we have just proved, necessarily \(c^*\le  \dfrac{2\sqrt{M_h}+M_f}{m_g}\). Finally,  the inequality \(c^*\ge \dfrac{2\sqrt{\dot h(\alpha)}+f(\alpha)}{g(\alpha)}\) follows from assertion (27) in \cite[Theorem 14]{CaMaPa}.

\end{proof}

\bg

By using the change of variable given in Lemma \ref{l:negative} we obtain the following result in the case \(h\) is negative in \((\alpha, \beta)\).

\bg
\begin{thm}
\label{t:neg}
Let \(f,g,h\) be continuous functions defined in \([\alpha,\beta]\) such that \(h\) is negative on \((\alpha,\beta)\) with \(h(\alpha)=h(\beta)=0\) and differentiable at \(\alpha,\beta\).  

Suppose that 

\[
g(\beta)>0 \quad \textit{ and } \quad \int_u^\beta g(s) \ \de s >0 \ \textit{ for every } u\in [\alpha,\beta).
\]

Then, there exists a value \(c^*\) such that 
such that problem 
\beq 
\label{eq:ordine1neg} \begin{cases}
\overset{\cdot}{z}\left(u\right)=f(u)-cg\left(u\right)-\dfrac{h(u)}{z\left(u\right)}, & u\in(\alpha, \beta)\\
z\left(u\right)>0, &  u\in(\alpha, \beta)\\
z(\alpha^+)=z(\beta^-)=0
%
\end{cases}
\eeq
 admits solution if and only if \(c\ge c^*\).

Moreover, 
 for every \(c\ge c^*\) the solution \(z_c\)  is differentiable at  \(\alpha,\beta\), with {\rm (}see \eqref{d:rpm}{\rm )}
\beq
\label{eq:puntozneg}
\dot z(\alpha)= r_+(c,\alpha), \qquad \dot z(\beta) = \begin{cases} r_+(c,\beta) & \text{ if } c>c^* \\ r_-(c,\beta) & \text{ if } c=c^*. \end{cases}
\eeq

Finally,
put 
\[  m_g^*:=\inf_{u\in (\alpha,\beta)} -\!\!\!\!\!\!\int_u^\beta g(s)  \de s ; \quad  M_f^*:=\sup_{u\in (\alpha,\beta)}  -\!\!\!\!\!\!\int_u^\beta f(s)  \de s \quad  M_h^*:= \sup_{u\in (\alpha,\beta)}  -\!\!\!\!\!\!\int_u^\beta \frac{h(s)}{s-\beta} \ \de s \]
we have that \( m_g^*>0\); \(M_f^*,  M_h^*<+\infty\) and  \(c^*\) satisfies
\beq
\label{eq:cstarneg}
\frac{2\sqrt{\dot h(\beta)}+f(\beta)}{g(\beta)}\le c^*\le \frac{2\sqrt{ M_h^*}+ M_f^*}{ m_g^*}.
\eeq

\begin{proof}
The proof follows from Theorem \ref{t:pos} and Lemma \ref{l:negative}. Indeed, by Lemma \ref{l:negative} we get that \(z\) is a solution of problem \eqref{eq:ordine1neg}  for some \(c\) if and only if the function \(\zeta(u):= -z(\alpha+\beta-u)\) is a solution of the associated problem 
\[
\begin{cases}
\overset{\cdot}{\zeta}\left(u\right)=\tilde f(u)-c\tilde g\left(u\right)-\dfrac{\tilde h(u)}{\zeta\left(u\right)}, & u\in(\alpha, \beta)\\
\zeta\left(u\right)<0, &  u\in(\alpha, \beta)\\
\zeta(\alpha^+)=\zeta(\beta^-)=0.
\end{cases}
\]
Notice that \(\dot \zeta(\alpha)=\dot z(\beta)\) and \(\dot \zeta(\beta)=\dot z(\alpha)\), so \eqref{eq:puntozneg} holds.

Moreover, observe that 
 if \(\gamma \) is a continuous function in \((\alpha,\beta)\), then put \(\tilde \gamma(u):=\gamma(\alpha+\beta-u)\) we have
\[   -\!\!\!\!\!\!\int_u^\beta \gamma(s) \ \de s =   -\!\!\!\!\!\!\int_\alpha^{\alpha+\beta-u} \tilde \gamma(t) \ \de t\]
so
\[ \inf_{u\in (\alpha,\beta)}  -\!\!\!\!\!\!\int_u^\beta \gamma(s) \ \de s = \inf_{v\in (\alpha,\beta)}  -\!\!\!\!\!\!\int_\alpha^v \tilde \gamma(t)\ \de t\]
and the same argument holds when taking the supremum of the mean value. Finally,
\[\begin{split}\sup_{u\in (\alpha,\beta)} -\!\!\!\!\!\!\int_u^\beta \frac{h(s)}{s-\beta} \de s =\sup_{u\in (\alpha,\beta)}
\frac{\dis\int_u^{\beta} \frac{-\tilde h(\alpha+\beta-s)}{s-\beta}  \de s}{\beta-u}    \\ =
\sup_{u\in (\alpha,\beta)}
\frac{\dis\int_\alpha^{\alpha+\beta-u} \frac{\tilde h(t)}{t-\alpha} \de t}{\beta-u}= \sup_{v\in (\alpha,\beta)} -\!\!\!\!\!\!\int_\alpha^v \frac{\tilde h(t)}{t-\alpha} \de t.\end{split}\]
So, estimate \eqref{eq:cstarneg} is the same as  \eqref{eq:cstar} for the problem \eqref{eq:ordine1neg}.
\end{proof}
\end{thm}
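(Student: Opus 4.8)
The plan is to deduce every assertion from Theorem \ref{t:pos} by means of the reflection introduced in Lemma \ref{l:negative}, since the present hypotheses — $h$ negative on $(\alpha,\beta)$ and $g$ anchored at the \emph{right} endpoint $\beta$ — are precisely the mirror images of those in Theorem \ref{t:pos}. Concretely, I would set $\tilde f(u):=f(\alpha+\beta-u)$, $\tilde g(u):=g(\alpha+\beta-u)$ and $\tilde h(u):=-h(\alpha+\beta-u)$, so that $\tilde h>0$ on $(\alpha,\beta)$ with $\tilde h(\alpha)=\tilde h(\beta)=0$ and $\tilde h$ differentiable at the endpoints. By Lemma \ref{l:negative}, a function $z\in C^1(\alpha,\beta)$ is a positive solution of \eqref{eq:ordine1neg} for a given $c$ if and only if $\zeta(u):=-z(\alpha+\beta-u)$ is a negative solution of the problem governed by the reflected data $\tilde f,\tilde g,\tilde h$, which is exactly the problem covered by Theorem \ref{t:pos}.

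Next I would verify that the hypotheses of Theorem \ref{t:pos} hold for the reflected data. The condition $g(\beta)>0$ gives $\tilde g(\alpha)=g(\beta)>0$, and the mean-value identity $-\!\!\!\!\!\!\int_u^\beta g(s)\,\de s = -\!\!\!\!\!\!\int_\alpha^{\alpha+\beta-u}\tilde g(t)\,\de t$ turns $\int_u^\beta g>0$ for all $u\in[\alpha,\beta)$ into $\int_\alpha^v \tilde g>0$ for all $v\in(\alpha,\beta]$. Since the parameter $c$ is untouched by the reflection, Theorem \ref{t:pos} then produces a threshold $c^*$ for which the reflected problem — hence \eqref{eq:ordine1neg} — is solvable if and only if $c\ge c^*$.

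For the endpoint slopes I would differentiate $\zeta(u)=-z(\alpha+\beta-u)$ to obtain $\dot\zeta(u)=\dot z(\alpha+\beta-u)$, whence $\dot\zeta(\alpha)=\dot z(\beta)$ and $\dot\zeta(\beta)=\dot z(\alpha)$. Because $\tilde f(\alpha)=f(\beta)$, $\tilde g(\alpha)=g(\beta)$ and $\dot{\tilde h}(\alpha)=\dot h(\beta)$ (the chain rule flips the sign twice), the quantities $r_\pm$ of \eqref{d:rpm} for the reflected data evaluated at $\alpha$ coincide with $r_\pm(c,\beta)$ for the original data, and symmetrically at $\beta$. Feeding \eqref{eq:puntoz} applied to $\zeta$ back through these identities then yields exactly \eqref{eq:puntozneg}.

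Finally, the estimate \eqref{eq:cstarneg} follows from \eqref{eq:cstar} for the reflected problem by the same change of variable: the infimum/supremum over $u\in(\alpha,\beta)$ equals the infimum/supremum over $v=\alpha+\beta-u\in(\alpha,\beta)$, so $m_g^*,M_f^*,M_h^*$ agree with $m_g,M_f,M_h$ computed from $\tilde f,\tilde g,\tilde h$, while $\sqrt{\dot{\tilde h}(\alpha)}=\sqrt{\dot h(\beta)}$ and $\tilde f(\alpha)/\tilde g(\alpha)=f(\beta)/g(\beta)$. I expect the only genuinely delicate bookkeeping — and hence the step I would write out with care — to be the weighted mean-value identity for $M_h^*$, where the weight $1/(s-\beta)$ must be matched against $1/(t-\alpha)$: under $s=\alpha+\beta-t$ one has $h(\alpha+\beta-t)/(\alpha-t)=\tilde h(t)/(t-\alpha)$, giving $-\!\!\!\!\!\!\int_u^\beta \frac{h(s)}{s-\beta}\,\de s=-\!\!\!\!\!\!\int_\alpha^{\alpha+\beta-u}\frac{\tilde h(t)}{t-\alpha}\,\de t$. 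Everything else is a direct, if tedious, transcription of Theorem \ref{t:pos}.
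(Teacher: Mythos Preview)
Your proposal is correct and follows essentially the same approach as the paper: both reduce to Theorem \ref{t:pos} via the reflection of Lemma \ref{l:negative}, then transfer the threshold, the endpoint derivatives, and the integral-mean estimates through the change of variable $u\mapsto \alpha+\beta-u$. Your write-up is in fact slightly more explicit than the paper's in verifying that the hypotheses of Theorem \ref{t:pos} hold for the reflected data and in tracking the double sign flip giving $\dot{\tilde h}(\alpha)=\dot h(\beta)$, but the argument is the same.
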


\bg

We now consider problem \eqref{eq:pr-h} with a changing-sign function \(h\).

\md
Of course, the open set \((0,1)\setminus H_0\) is union of a finite number of disjont intervals  (see \eqref{ip:segnoh})
\beq\label{eq:hsplit}
(0,1)\setminus H_0= \bigcup_{k=1}^n (\alpha_k,\beta_k).
\eeq
In what follows it will be convenient to  distinguish the indexes \(k\) such that \(h\) is positive in \((\alpha_k,\beta_k)\) from those such that \(h\) is negative  in \((\alpha_k,\beta_k)\). Hence, we set
\beq \label{d:kpm} \begin{split} K^+:=\{k\in \{1,\cdots,n\}: \ h(u)>0 \text{ in  } (\alpha_k,\beta_k)\};\\  K^-:=\{k\in \{1,\cdots,n\}: \ h(u)<0 \text{ in  } (\alpha_k,\beta_k)\}.\end{split} \eeq

%
%
%

Finally, let us define the following constants:
\beq
\label{d:notcstar}
\begin{array}{ll}
G_k^+:= \dis\inf_{u\in (\alpha_k,\beta_k)} -\!\!\!\!\!\!\int_{\alpha_k}^u g(s)  \de s, \quad & G_k^-:=\dis\inf_{u\in (\alpha_k,\beta_k)} -\!\!\!\!\!\!\int_u^{\beta_k} g(s)  \de s \\ F_k^+:= \dis\sup_{u\in (\alpha_k,\beta_k)} -\!\!\!\!\!\!\int_{\alpha_k}^u f(s)  \de s, \quad & F_k^-:=\dis\sup_{u\in (\alpha_k,\beta_k)} -\!\!\!\!\!\!\int_u^{\beta_k} f(s)  \de s \\
H_k^+:= \dis\sup_{u\in (\alpha_k,\beta_k)} -\!\!\!\!\!\!\int_{\alpha_k}^u \frac{h(s)}{s-\alpha_k} \ \de s, \quad & H_k^-:=\dis\sup_{u\in (\alpha_k,\beta_k)} -\!\!\!\!\!\!\int_u^{\beta_k} \frac{h(s)}{s-\beta_k} \  \de s 

\end{array}
\eeq

\md
We can now  prove the following existence result for the general problem \eqref{eq:pr-h}.

\bg
\begin{thm}\label{t:gen}
Let \(f,g,h\) be continuous functions defined in \([0,1]\) such that condition \eqref{ip:segnoh} holds true. Assume that    \(h\) is differentiable at each point \( u_0\in H_0 \cap (0,1).\)
Moreover, suppose that {\rm (}see \eqref{d:kpm}{\rm )}:

\beq\label{ip:g1}
g(\alpha_k)>0 
\text{ and } \int_{\alpha_k}^u g(s)  \de s >0,\ \text{ for every } u\in (\alpha_k,\beta_k], \ \text{ whenever } k \in K^+;  
\eeq

\beq\label{ip:g2}
g(\beta_k)>0 
\text{ and } \int_u^{\beta_k} g(s)  \de s >0,\ \text{ for every } u\in  [\alpha_k,\beta_k), \ \ \text{ whenever } k \in K^-. 
\eeq


Then, there exists a value \(c^*\in \RR\)
such that problem \eqref{eq:pr-h}
admits a solution \(z_c\) if and only if \(c \ge c^*\) and it is not solvable for every \(c<c^*\).

\md
Moreover, with the notations given in  \eqref{d:kpm} and \eqref{d:notcstar}, we have
\beq
\label{eq:stimagen} 
\begin{split}
\max\left\{\max_{k\in K^+} \frac{2\sqrt{\dot h(\alpha_k)}+f(\alpha_k)}{g(\alpha_k)}  \ , \ \max_{k\in K^-}\frac{2\sqrt{\dot h(\beta_k)}+f(\beta_k)}{g(\beta_k)} \right\} \le \\ \le c^* \le \max\left\{\max_{k\in K^+} \frac{2\sqrt{H_k^+}+F_k^+}{G_k^+}  \ , \ \max_{k\in K^-} \frac{2\sqrt{H_k^-}+F_k^-}{G_k^-}  \right\},
\end{split}
\eeq
 (where the maxima involved in  \eqref{eq:stimagen} have to be intended as \(-\infty\) if the set to which they refer is empty).

Finally, for every \(c>c^*\) the solution \(z_c\) is differentiable at every point \(u_0\in H_0\) with 
\beq\label{eq:zpuntoh}
\dot z(u_0)= \frac12 \left(f(u_0)-cg(u_0)+ \sqrt{(f(u_0)-cg(u_0))^2-4\dot h(u_0)} \right).
\eeq

\end{thm}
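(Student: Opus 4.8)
The plan is to reduce \eqref{eq:pr-h} to the two sign-definite situations already settled in Theorems~\ref{t:pos} and \ref{t:neg}, solving the problem separately on each interval $(\alpha_k,\beta_k)$ of the decomposition \eqref{eq:hsplit} and then gluing the local solutions across the zeros of $h$. On an interval with $k\in K^+$ the function $h$ is positive and hypothesis \eqref{ip:g1} is precisely the assumption of Theorem~\ref{t:pos}, which produces a local threshold $c_k^*$ such that the local problem (with sign condition $z<0$ and vanishing boundary data) is solvable if and only if $c\ge c_k^*$; symmetrically, for $k\in K^-$ the function $h$ is negative and \eqref{ip:g2} is the assumption of Theorem~\ref{t:neg}, giving a threshold $c_k^*$ for the problem with $z>0$. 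Each $c_k^*$ is a finite real number, the corresponding bounds in \eqref{eq:cstar} or \eqref{eq:cstarneg} being finite with a strictly positive denominator, and since there are finitely many intervals I would set $c^*:=\max_{k}c_k^*\in\RR$.

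For the existence statement, when $c\ge c^*$ I would take on each $(\alpha_k,\beta_k)$ the local solution $z_{c,k}$ given by Theorem~\ref{t:pos} or \ref{t:neg} (available since $c\ge c^*\ge c_k^*$) and define $z$ on $(0,1)$ by pasting these pieces together and setting $z=0$ at every point of $H_0$. Each $z_{c,k}$ belongs to $C^1(\alpha_k,\beta_k)$ and vanishes at $\alpha_k,\beta_k$, so the glued $z$ is continuous on $(0,1)$, lies in $C^1((0,1)\setminus H_0)$, carries on each subinterval the sign opposite to that of $h$ (hence $z(u)h(u)<0$ off $H_0$) and satisfies $z(0^+)=z(1^-)=0$; thus $z$ solves \eqref{eq:pr-h}.

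For non-existence I would argue by restriction. Any solution $z$ of \eqref{eq:pr-h} vanishes at every point of $H_0$, so its restriction to each $(\alpha_k,\beta_k)$ is a solution of the corresponding local problem; by Theorems~\ref{t:pos}/\ref{t:neg} this forces $c\ge c_k^*$ for every $k$, i.e. $c\ge c^*$. Hence \eqref{eq:pr-h} has no solution for $c<c^*$. The two-sided estimate \eqref{eq:stimagen} is then immediate from $c^*=\max_k c_k^*$ and the local brackets \eqref{eq:cstar}, \eqref{eq:cstarneg}: each local lower bound does not exceed the corresponding $c_k^*\le c^*$, so the maximum of the lower bounds bounds $c^*$ from below, while $c_k^*$ is dominated by its local upper bound for every $k$, so $c^*=\max_k c_k^*$ is dominated by the maximum of the upper bounds; the convention $\max\emptyset=-\infty$ disposes of the cases $K^+=\emptyset$ or $K^-=\emptyset$.

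It remains to establish \eqref{eq:zpuntoh} for $c>c^*$, and here the argument is most delicate. Since $c>c^*$ means $c>c_k^*$ strictly on every interval, the ``$r_-$'' alternative in \eqref{eq:puntoz}/\eqref{eq:puntozneg} never arises, and at \emph{both} endpoints of \emph{every} subinterval the local theorems give $\dot z=r_+(c,\cdot)$, with $r_+$ as in \eqref{d:rpm}. An interior $u_0\in H_0$ is the right endpoint of one subinterval and the left endpoint of the next; as $h$ is (two-sidedly) differentiable at $u_0$, the two one-sided values of $\dot h(u_0)$ coincide, so the left and right derivatives of $z$ at $u_0$ are both equal to $r_+(c,u_0)$ and $z$ is differentiable there with $\dot z(u_0)=r_+(c,u_0)$, which is \eqref{eq:zpuntoh} (at $u_0\in\{0,1\}$ the one-sided assertion is immediate). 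The delicate point is the reality of the square root in $r_+$: at a point where $\dot h(u_0)>0$, necessarily one at which $h$ passes from negative to positive, i.e. a left endpoint of a $K^+$-interval or a right endpoint of a $K^-$-interval, one must use $g(u_0)>0$ together with the lower bound in \eqref{eq:cstar} or \eqref{eq:cstarneg}, which under $c>c^*$ yields $(f(u_0)-cg(u_0))^2>4\dot h(u_0)$; at every other point $\dot h(u_0)\le 0$ and the discriminant is nonnegative automatically. I expect this verification, that the two one-sided derivative formulas coincide and that the discriminant stays nonnegative, to be the main obstacle, the remainder being an organized assembly of the sign-definite results of Theorems~\ref{t:pos} and \ref{t:neg}.
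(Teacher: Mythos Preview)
Your proposal is correct and follows essentially the same route as the paper: decompose $(0,1)$ via \eqref{eq:hsplit}, invoke Theorems~\ref{t:pos}/\ref{t:neg} on each subinterval to obtain local thresholds $c_k^*$, set $c^*=\max_k c_k^*$, glue the local solutions for $c\ge c^*$, and argue non-existence for $c<c^*$ by restriction; the estimate \eqref{eq:stimagen} and the derivative formula \eqref{eq:zpuntoh} are then read off from the local results. Your treatment is in fact slightly more detailed than the paper's in two places---you spell out why the one-sided derivatives at an interior $u_0\in H_0$ both equal $r_+(c,u_0)$ when $c>c^*$, and you verify the nonnegativity of the discriminant---whereas the paper simply cites \eqref{eq:puntoz} and \eqref{eq:puntozneg}.
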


\begin{proof}
In each interval \((\alpha_k,\beta_k)\) we can apply 
  Theorem \ref{t:pos} or Theorem \ref{t:neg} according to the sign of \(h\). Hence, we can deduce  that  for every \(k=1,\cdots,n\)  there exists a value \(c_k^*\) such that for every \(c\ge c_k^*\) there exists a function \(z_{k,c}\in C^1(\alpha_k,\beta_k)\), 
satisfying the differential equation of \eqref{eq:pr-h} in the same interval and such that 
\(z_{k,c}(u)h(u)<0\) for every \(u\in (\alpha_k,\beta_k)\), with \(z_{k,c}(\alpha_k^+)=z_{k,c}(\beta_k^-)=0\).

\sm
Put \(c^*:= \max\{c_1^*, \cdots,c_n^*\}\). For every  \(c\ge c^*\) let us 
define \(z_c:[0,1]\to \RR\) by setting
\[ z_c(u)=\begin{cases} z_{k,c}(u) & \text{ if } u\in (\alpha_k,\beta_k), \ k=1,\cdots,n \\
0 & \text{ elsewhere. } \end{cases}\]
Of course, \(z_c\) is continuous in \([0,1]\), continuously differentiable on \((0,1)\setminus H_0\), and it satisfies the differential equation of problem \eqref{eq:pr-h} in \((0,1)\setminus H_0\). Moreover \(z_c(u)h(u)<0\) for every \(u\in (0,1)\setminus H_0\) and \(z_c(0^+)=z_c(1^-)=0\). So, \(z_c\) is a solution of problem \eqref{eq:pr-h}.

\sm
Instead, if we fix \(c<c^*\), then \(c<c_k^*\) for some \(k\in \{1,\cdots,n\}\), hence there is no function \(z\in C^1((\alpha_k,\beta_k))\) satisfying the differential equation of \eqref{eq:pr-h}, such that \(z(u)h(u)<0\) in \((\alpha_k,\beta_k)\) and  vanishing at \(\alpha_k,\beta_k\). This implies that problem \eqref{eq:pr-h} does not admit solutions.
 
 Moreover, since \(c^*=c_k^*\) for some \(k\in \{1,\cdots,n\}\), estimate \eqref{eq:stimagen} follows from \eqref{eq:cstar} and \eqref{eq:cstarneg}.

Finally, the validity of \eqref{eq:zpuntoh} for \(c>c^*\) is a consequence of \eqref{eq:puntoz} and \eqref{eq:puntozneg}.
\end{proof}

\bg
\begin{rem}\label{r:cstar}
\rm
Taking into account \eqref{eq:puntoz} and \eqref{eq:puntozneg}, when \(c=c^*\) we can not state anything about the differentiability of the solution \(z_{c^*}\) at the points of \(H_0\). Indeed,  
if \(c^*=c_j^*\) for some  \(j\in K^+\), with  \(\alpha_j\in (0,1)\), and \(\alpha_j=\beta_{j-1}\) with \(j-1\in K^-\), then  by \eqref{eq:puntoz} and \eqref{eq:puntozneg} we obtain

\[ \dot z_{c^*}(\alpha_j^+)=\dot z_{j,c_j^*}(\alpha_j)= r_-(c_j^*,\alpha_j)\]

\[  \dot z_{c^*}(\beta_{j-1}^-)=\dot z_{j-1,c_j^*}(\beta_{j-1})=\begin{cases} r_-(c_j^*,\alpha_j) & \text{ if } c_j^*=c_{j-1}^* \\ r_+(c_j^*,\alpha_j) & \text{ if } c_j^*>c_{j-1}^*. \end{cases}\]

So, if \(c_j^*>c_{j-1}^*\) the gluing \(z_{c^*}\) is not differentiable at \(\alpha_j\). Recalling that the threshold values \(c_j^*,c_{j-1}^*\) are unknown in general, it is unknown also the existence of a solution for \(c=c^*\). Hovewer, if such a solution \(z_{c^*}\) exists for \(c=c^*\) then it satisfy  (see \eqref{eq:puntoz} and \eqref{eq:puntozneg})
\[ \dot z_{c^*}(\alpha_j)= \frac12 \left( f(\alpha_j)-c^*g(\alpha_j)-\sqrt{(f(\alpha_j-c^*g(\alpha_j))^2 - 4 \dot h(\alpha_j))} \right). \]

\sm

On the other hand, if \(K^+\subset \{1\}\), that is if \(h(u)<0 \) for every \(u\in (0,1)\) or  \(h(u)(u-u_0)<0\) for  every  \(u\ne u_0\), for some \(u_0\in (0,1)\), then  this situation does not occur  and we can state that there exists a solution  also for \(c=c^*\).  We will treat this case in Corollary \ref{c:one}.

\end{rem}

\section{Existence of t.w.s.}\label{s:exis}
\md

In this Section we combine the results of the previous sections in order to derive the existence and non-existence results for the t.w.s. of equation \eqref{eq:E}.
 
In whats follows we will adopt the same notation introduced in the previous section in \eqref{eq:hsplit}, \eqref{d:kpm}  and \eqref{d:notcstar}, where in this case the function \(h\) is replaced by \(D(u)\rho(u)\),  so 
\beq \label{d:kpmD}
D_0=H_0\setminus\{0,1\} \quad \text{ and } \quad (0,1)\setminus D_0= \bigcup_{k=1}^n (\alpha_k,\beta_k). 
\eeq

Moreover, we put 
\[ D_{00}:=\{u\in (0,1): \ D(u)=\dot D(u)=0\}.\]
Of course,  \(D_{00}\) can be empty.

\md
The main result of this section is the following.
\bg

\begin{thm}\label{t:main2}
Suppose that  \eqref{ip:g1} and \eqref{ip:g2} are satisfied.
Moreover, assume that 
\beq\label{ip:nonzero}
g(u)> 0 \ \text{ for every } \ \ u\in D_{00}.
\eeq

Then, there exists a value \(\hat c \) 
such that equation \eqref{eq:E} admits t.w.s. for every  \(c>\hat c \), whereas no t.w.s. exists for \(c<\hat c \).  Finally, put 
\[ K_0^-:=\{ k=2,\cdots,n \ : \ k \in K^- \text{ and } \dot D(\alpha_k)=0\} \]
 and considered the quantities defined in \eqref{d:notcstar}, 
we have that \(\hat c \) satisfies the estimate 
\beq
\label{eq:stimagen3} 
\begin{split}
\max\left\{\max_{k\in K^+} \frac{2\sqrt{\dot h(\alpha_k)}+f(\alpha_k)}{g(\alpha_k)}  \ , \ \max_{k\in K^-}\frac{2\sqrt{\dot h(\beta_k)}+f(\beta_k)}{g(\beta_k)}\ , \ \max_{k\in K_0^-}\frac{f(\alpha_k)}{g(\alpha_k)} \right\} \le \\ \le \hat c  \le \max\left\{\max_{k\in K^+} \frac{2\sqrt{H_k^+}+F_k^+}{G_k^+}  \ , \ \max_{k\in K^-} \frac{2\sqrt{H_k^-}+F_k^-}{G_k^-}, \ \max_{k\in K_0^-}\frac{f(\alpha_k)}{g(\alpha_k)}   \right\}.
\end{split}
\eeq
 (where the maxima involved in  \eqref{eq:stimagen3} have to be intended as \(-\infty\) if the set to which they refer is empty).

\end{thm}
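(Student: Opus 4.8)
The plan is to combine the equivalence of Theorem \ref{t:equiv} with the solvability threshold of Theorem \ref{t:gen}, the only genuinely new ingredient being a careful analysis of the continuous extension of $z_c/D$ at the degenerate zeros of $D$, i.e.\ at the points of $D_{00}$. By Theorem \ref{t:equiv}, a t.w.s.\ of \eqref{eq:E} exists if and only if the first order problem \eqref{eq:ordine1} admits a solution $z_c$ for which $z_c/D$, defined on $(0,1)\setminus D_0$, extends continuously to some $\phi$ on $(0,1)$. First I would apply Theorem \ref{t:gen} with $h:=\rho D$: since $D\in C^1$ and $D(u_0)=0$ at each interior zero, $h$ is automatically differentiable there with $\dot h(u_0)=\rho(u_0)\dot D(u_0)$, so the hypotheses \eqref{ip:g1}--\eqref{ip:g2} yield a value $c^*$ such that \eqref{eq:ordine1} is solvable exactly when $c\ge c^*$, with $c^*$ bounded as in \eqref{eq:stimagen}.

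It then remains to decide, for $c>c^*$, whether the (unique, by Proposition \ref{p:unique}) solution $z_c$ produces a continuous extension; the only delicate points are the interior zeros $u_0\in D_0$, where $z_c$ and $D$ both vanish. When $\dot D(u_0)\neq0$ the difference quotients give immediately $\lim_{u\to u_0}z_c(u)/D(u)=\dot z_c(u_0)/\dot D(u_0)$, a finite value, so no constraint arises. When instead $u_0\in D_{00}$, I would rewrite the differential equation in \eqref{eq:ordine1} as the identity $z_c(u)/D(u)=\rho(u)/\bigl(f(u)-cg(u)-\dot z_c(u)\bigr)$ valid for $u\neq u_0$, and pass to the limit using the endpoint derivative $\dot z_c(u_0)=r_+(c,u_0)$ from \eqref{eq:zpuntoh} (note $\dot h(u_0)=0$ here). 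This gives $\lim_{u\to u_0}z_c(u)/D(u)=\rho(u_0)/r_-(c,u_0)$, with $r_\pm$ as in \eqref{d:rpm}. Since $\rho(u_0)>0$, the limit is finite exactly when $r_-(c,u_0)\neq0$; and because $\dot h(u_0)=0$ one has $r_-(c,u_0)=\tfrac12\bigl(f(u_0)-cg(u_0)-|f(u_0)-cg(u_0)|\bigr)$, which vanishes precisely when $f(u_0)-cg(u_0)\ge0$, i.e.\ (using $g(u_0)>0$ from \eqref{ip:nonzero}) when $c\le f(u_0)/g(u_0)$. Hence the extension survives at $u_0$ if and only if $c>f(u_0)/g(u_0)$.

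The decisive bookkeeping step is that every interior zero $u_0\in D_0$ is the left endpoint $\alpha_k$ of a unique interval $(\alpha_k,\beta_k)$ with $k\ge2$ in the decomposition \eqref{d:kpmD}. If $u_0\in D_{00}$ and $k\in K^+$, the needed condition $c>f(\alpha_k)/g(\alpha_k)$ is already implied by $c>c^*$, because the lower bound in \eqref{eq:stimagen} gives $c^*\ge(2\sqrt{\dot h(\alpha_k)}+f(\alpha_k))/g(\alpha_k)=f(\alpha_k)/g(\alpha_k)$ (recall $\dot h(\alpha_k)=0$); if instead $k\in K^-$, then $\alpha_k\in K_0^-$ and the condition is not captured by $c^*$, so it must be imposed separately. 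I would therefore set $\hat c:=\max\{c^*,\ \max_{k\in K_0^-}f(\alpha_k)/g(\alpha_k)\}$. For $c>\hat c$ all the above constraints hold, $z_c/D$ extends continuously, and Theorem \ref{t:equiv} furnishes a t.w.s. For $c<\hat c$ either $c<c^*$, so \eqref{eq:ordine1} has no solution and no t.w.s.\ exists, or $c<f(\alpha_k)/g(\alpha_k)$ for some $k\in K_0^-$; in the latter case $r_-(c,\alpha_k)=0$ forces $z_c/D\to-\infty$ at the interior value $\alpha_k$, so by the uniqueness of Proposition \ref{p:unique} no $C^1$ profile through $\alpha_k$ can be built, and again no t.w.s.\ exists.

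Finally, the estimate \eqref{eq:stimagen3} is obtained by reading off $\hat c=\max\{c^*,\max_{k\in K_0^-}f(\alpha_k)/g(\alpha_k)\}$: one inserts the two-sided bound \eqref{eq:stimagen} for $c^*$ and keeps the exactly known term $\max_{k\in K_0^-}f(\alpha_k)/g(\alpha_k)$ identically in both the lower and the upper bound. The step I expect to be the main obstacle is the limit computation at the points of $D_{00}$: one must justify rigorously that $\dot z_c$ admits a one-sided limit equal to $r_+(c,u_0)$ at such endpoints, so that the identity $z_c/D=\rho/(f-cg-\dot z_c)$ can be passed to the limit without circularity (invoking the $C^1$-regularity of $z_c$ up to the endpoints as established in the framework of \cite{CaMaPa}), and one must handle with care the borderline subcase $f(u_0)=cg(u_0)$.
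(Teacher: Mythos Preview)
Your overall architecture is correct and matches the paper's: define $\hat c:=\max\{c^*,\max_{k\in K_0^-}f(\alpha_k)/g(\alpha_k)\}$, use Theorem~\ref{t:gen} for the solvability of \eqref{eq:ordine1}, and then check whether $z_c/D$ extends continuously across each $\alpha_k\in D_0$, the only nontrivial case being $\alpha_k\in D_{00}$. The bookkeeping observation that the $K^+$ degenerate zeros are already absorbed by the lower bound in \eqref{eq:stimagen} is also exactly what the paper uses.

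There is, however, a genuine gap at the heart of your argument, and you correctly flag it yourself. At a point $u_0\in D_{00}$ you rewrite the equation as $z_c(u)/D(u)=\rho(u)/\bigl(f(u)-cg(u)-\dot z_c(u)\bigr)$ and then ``pass to the limit using $\dot z_c(u_0)=r_+(c,u_0)$''. But \eqref{eq:zpuntoh} (equivalently \eqref{eq:puntoz}, \eqref{eq:puntozneg}) only gives \emph{differentiability of $z_c$ at $u_0$}, i.e.\ convergence of the difference quotient $[z_c(u)-z_c(u_0)]/(u-u_0)$; it does \emph{not} give $\dot z_c(u)\to r_+(c,u_0)$ as $u\to u_0$, which is what your limit passage needs. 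The results quoted from \cite{CaMaPa} do not supply this either: they assert differentiability at the endpoints, not $C^1$-regularity up to the endpoints. In fact the paper's Remark~\ref{r:zc1} makes clear that continuity of $\dot z_c$ across $D_0$ is a \emph{consequence} of the limit analysis, not an input to it. In the regime $f(u_0)-cg(u_0)<0$ one has $\dot z_c(u_0)=0=\dot D(u_0)$, so the naive difference-quotient route also leads to a $0/0$ form and gives nothing.

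The paper closes this gap with Proposition~\ref{p:limite}, whose proof is an upper/lower-solution comparison: starting from a single sequence $u_n\to u_0$ with $\dot z_c(u_n)\to 0$ (available from the mean value theorem once $\dot z_+(u_0)=0$), one traps $D/z_c$ between $\pm\varepsilon$ (Case~1) or between $(f(u_0)-cg(u_0))/\rho(u_0)\pm\varepsilon$ (Case~2) using the auxiliary functions $\omega_\varepsilon=-D/\varepsilon$ and $\eta_{\pm\varepsilon}$ together with Lemma~\ref{l:Gronwall}. This is the missing ingredient your sketch needs; once you have it, your identity $z_c/D=\rho/(f-cg-\dot z_c)$ becomes legitimate a posteriori, but it cannot be used to \emph{derive} the limit. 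Likewise, in the non-existence direction, the statement ``$r_-(c,\alpha_k)=0$ forces $z_c/D\to-\infty$'' should be argued via the contrapositive of Proposition~\ref{p:limite} (if $f(\alpha_k)-cg(\alpha_k)>0$ then $\dot z_+(\alpha_k)\ne 0$, hence $|z_c/D|\to\infty$ since $\dot D(\alpha_k)=0$), not via the ODE identity.
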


\md
The proof of the previous theorem needs some auxiliary results. Indeed, as it is clear by Theorems \eqref{t:equiv} and \eqref{t:gen}, the existence of t.w.s. depends on the solvability of problem \eqref{eq:pr-h} by a solution \(z\) such that the ratio \(z/D\) has a continuous extension in \((0,1)\). This is trivial when \(\dot D(u)\ne 0\) for every \(u\in D_0\), whereas it requires  an accurate study when \(D_{00}\) is nonempty. 

\md
Let us first recall the main tool of the upper and lower-solutiuon technique, which is a simple consequence of Gronwall's Lemma.  Recall that a lower-solution [resp. upper-solution] of equation \eqref{eq:uconh} is a function \(\xi\in C^1(\alpha,\beta)\) such that  
\[ \dot \xi (u)\ \le [\ge] \  f(u)-cg(u)-\frac{h(u)}{\xi(u)}\quad \text{ for every } u\in (\alpha,\beta).\]
When the above inequality is strict, then the function \(\xi\) is said to be a strict lower-solution [upper-solution]. 

\md 
\begin{lem}\label{l:Gronwall}
Let \(z\), \(\zeta\) respectively be a solution and an upper-solution of equation \eqref{eq:uconh} in an interval \(I\subset (\alpha,\beta)\)  and let \(u_0\in I\) be fixed.
Then,
\begin{itemize}
\item \ if \(z(u_0)\le \zeta(u_0)\), then  \(z(u)\le \zeta(u)\) for every \(u\ge u_0\)
\item \ if \(z(u_0)\ge \zeta(u_0)\), then  \(z(u)\ge \zeta(u)\) for every \(u\le u_0\).
\end{itemize}
Instead, if \(\zeta\) is a lower-solution, then
\begin{itemize}
\item \ if \(z(u_0)\ge \zeta(u_0)\), then  \(z(u)\ge \zeta(u)\) for every \(u\ge u_0\)
\item \ if \(z(u_0)\le \zeta(u_0)\), then  \(z(u)\le \zeta(u)\) for every \(u\le u_0\).
\end{itemize}

\end{lem}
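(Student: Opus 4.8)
The plan is to reduce all four assertions to a single scalar differential inequality for the difference $w:=z-\zeta$ and then to exploit it through the integrating-factor form of Gronwall's Lemma. First I would record that both $z$ and $\zeta$ are nowhere zero on $I$: this holds for the solution $z$ because $z(u)h(u)<0$ forces $z(u)\ne 0$, and for $\zeta$ because the term $h/\zeta$ occurs in the definition of upper/lower-solution, so $\zeta$ is $C^1$ and nonvanishing on $I$. Consequently the coefficient
\[ a(u):=\frac{h(u)}{z(u)\,\zeta(u)} \]
is continuous on $I$.

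Treat first the case in which $\zeta$ is an upper-solution. Subtracting the inequality $\dot\zeta\ge f-cg-h/\zeta$ from the identity $\dot z=f-cg-h/z$ gives
\[ \dot w(u)=\dot z(u)-\dot\zeta(u)\le -\frac{h(u)}{z(u)}+\frac{h(u)}{\zeta(u)}=h(u)\,\frac{z(u)-\zeta(u)}{z(u)\zeta(u)}=a(u)\,w(u). \]
Fix $u_0\in I$ and set $A(u):=\int_{u_0}^u a(s)\,\de s$, which is finite since $a$ is continuous on the compact subinterval joining $u_0$ and $u$. Multiplying by the positive factor $e^{-A(u)}$ turns the inequality into $\frac{\de}{\de u}\bigl(e^{-A(u)}w(u)\bigr)\le 0$, so $u\mapsto e^{-A(u)}w(u)$ is non-increasing on $I$. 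Reading this in the two directions and using $A(u_0)=0$ yields exactly the two claimed implications: if $w(u_0)\le 0$ then $e^{-A(u)}w(u)\le e^{-A(u_0)}w(u_0)\le 0$ for $u\ge u_0$, whence $w(u)\le 0$; and if $w(u_0)\ge 0$ then the same monotonicity gives $e^{-A(u)}w(u)\ge 0$ for $u\le u_0$, whence $w(u)\ge 0$.

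The lower-solution case is the mirror image: subtracting $\dot\zeta\le f-cg-h/\zeta$ reverses the inequality to $\dot w\ge a\,w$, so $u\mapsto e^{-A(u)}w(u)$ is now non-decreasing, and the same two-directional reading produces the remaining two implications. The only point requiring care, and the sole potential obstacle, is purely bookkeeping: one must match the direction of monotonicity of $e^{-A}w$ against the sign of the initial gap $w(u_0)$ and the side ($u\ge u_0$ or $u\le u_0$) on which the conclusion is drawn. The hypotheses $z,\zeta\ne 0$ guarantee that $a$ is well defined and locally integrable, so no singularity interferes with the argument on the interior interval $I$.
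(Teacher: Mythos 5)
Your proof is correct: the differencing of the equation and the upper/lower-solution inequality, the integrating factor $e^{-A(u)}$ with $a(u)=h(u)/\bigl(z(u)\zeta(u)\bigr)$, and the two-directional reading of the monotonicity of $e^{-A}w$ reproduce exactly all four assertions with the right signs. This is precisely the ``simple consequence of Gronwall's Lemma'' that the paper invokes -- it states the lemma without proof -- so your argument fills in the intended one, and no gap remains (the nonvanishing of $z$ and $\zeta$ on $I$, which you justify, is indeed all that is needed for $a$ to be continuous there).
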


\bg
The next result regards the behaviour of \(z/D\) near the points of \(D_{00}\). The proof follows  arguments developed in similar contexts in \cite[Theorem 2.5]{CM1} and \cite[Lemma 9.1]{becoma}.

\bg
\begin{prop}\label{p:limite}
Let all the assumptions of Theorem \ref{t:gen} be satisfied, for \(h(u):=D(u)\rho(u)\). Let \(z\) be a solution of problem \eqref{eq:pr-h} for some  \(c\ge c^*\) and let \(u_0\in D_{00}\).  Let us denote by \(\dot z_+(u_0)\) and \(\dot z_-(u_0)\) the right and the left derivative of \(z\) at \(u_0\).

 Then, if \( \dot z_+(u_0)=0\) {\rm [}resp. \( \dot z_-(u_0)=0\){\rm ]}
we necessarily have \(f(u_0)-cg(u_0)\le 0\) and moreover
\[ \lim_{u\to u_0^+} \ [ \lim_{u\to u_0^-}] \ \frac{D(u)}{z(u)} = \begin{cases} 0 & \text{ if } f(u_0)-cg(u_0)=0 \\ 
\dfrac{f(u_0)-cg(u_0)}{\rho(u_0)} & \text{ if } f(u_0)-cg(u_0)<0.\end{cases}\]
\end{prop}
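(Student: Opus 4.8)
Let me understand what we need to prove. We have $u_0 \in D_{00}$, meaning $D(u_0) = \dot D(u_0) = 0$, and $z$ solves $\dot z = f - cg - \frac{D\rho}{z}$ with $z \cdot D < 0$ away from zeros, vanishing at $u_0$. We're told $\dot z_+(u_0) = 0$ (say) and need to conclude $f(u_0) - cg(u_0) \le 0$, then compute $\lim_{u \to u_0^+} D(u)/z(u)$.

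**Key observations about the limit $D/z$.**

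First, note that near $u_0$, both $D(u) \to 0$ and $z(u) \to 0$, so $D/z$ is $0/0$ indeterminate. The natural idea is to use the differential equation. Since $z(u)D(u) < 0$ and $h = D\rho$ with $\rho(u_0) > 0$, the sign of $z$ is opposite to $D$ near $u_0$. Rewrite the ODE as $\dot z(u) = f(u) - cg(u) - \rho(u)\cdot \frac{D(u)}{z(u)}$. This is crucial: the ratio $D/z$ appears *directly* in the equation.

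**The plan for the limit computation.**

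The key step is to analyze $w(u) := D(u)/z(u)$. From the ODE, for $u > u_0$,
$$\dot z(u) = f(u) - cg(u) - \rho(u) w(u).$$

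Since $\rho(u_0) > 0$, I solve for $w$: $w(u) = \frac{f(u) - cg(u) - \dot z(u)}{\rho(u)}$. Now take $u \to u_0^+$. We're given $\dot z_+(u_0) = 0$, so
$$\lim_{u \to u_0^+} w(u) = \frac{f(u_0) - cg(u_0) - 0}{\rho(u_0)} = \frac{f(u_0) - cg(u_0)}{\rho(u_0)},$$
**provided** $\dot z$ is continuous from the right at $u_0$ with $\dot z_+(u_0) = 0$, and provided the limit of $w$ exists. This essentially gives the second case of the formula directly.

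**The sign condition and the degenerate case.**

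For the claim $f(u_0) - cg(u_0) \le 0$: Since $z \cdot D < 0$, we have $w = D/z < 0$ away from $u_0$, so $\lim w \le 0$, forcing $f(u_0) - cg(u_0) \le 0$. Then:
- If $f(u_0) - cg(u_0) < 0$, the formula above gives the stated nonzero limit.
- If $f(u_0) - cg(u_0) = 0$, we must show $w \to 0$. Here the naive argument only gives $\lim w = 0$ from the formula, which is exactly what's claimed — but I need to verify this limit genuinely exists rather than merely being the candidate value.

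**The main obstacle.**

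The hard part is justifying that $\lim_{u\to u_0^+} w(u)$ exists and equals the candidate, especially in the degenerate case $f(u_0) - cg(u_0) = 0$. The subtlety: I've been assuming $\dot z$ has a one-sided limit $0$ at $u_0$, but a priori I only know $\dot z_+(u_0) = 0$ exists as a one-sided derivative — not that $\dot z(u) \to 0$ as $u \to u_0^+$. I expect to bridge this gap using the upper/lower-solution machinery (Lemma~\ref{l:Gronwall}) together with the fact that $D_{00}$ means $D$ vanishes to order $\ge 2$, so $D(u) = o(u - u_0)$.

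**Proof plan.**

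Here is how I would structure the argument:

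\begin{enumerate}
\item Establish $w := D/z < 0$ near $u_0$ from the sign condition $zD < 0$, hence $\limsup w \le 0$ (giving $f(u_0) - cg(u_0) \le 0$ once the limit is identified).
\item Show $\dot z(u) \to \dot z_+(u_0) = 0$ as $u \to u_0^+$. This uses the ODE: $\dot z(u) = (f - cg)(u) - \rho(u) w(u)$, combined with controlling $w$ via a comparison/squeezing argument. Since $u_0 \in D_{00}$, write $D(u) = \int_{u_0}^u \dot D(s)\,ds$ with $\dot D(u_0) = 0$, so $D(u)/(u-u_0) \to 0$. Pair this with a lower bound on $|z(u)|/(u-u_0)$ — obtained by comparing $z$ to a suitable strict super/sub-solution near $u_0$ à la \cite[Theorem 2.5]{CM1} — to force $w(u) \to 0$ or to the finite nonzero limit.
\item In the case $f(u_0) - cg(u_0) < 0$: a strict sub-solution argument shows $|z(u)| \ge \delta(u - u_0)$ for some $\delta > 0$, so $z$ vanishes linearly while $D$ vanishes faster; but then balancing in the ODE pins $w(u) \to (f(u_0) - cg(u_0))/\rho(u_0)$.
\item In the case $f(u_0) - cg(u_0) = 0$: here $z$ may vanish faster than linearly, but the estimate $D(u) = o(u - u_0)$ together with a lower bound $|z(u)| \gtrsim |D(u)|^{1/2}$ (arising from the singular term balancing $\dot z$) forces $w \to 0$. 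The delicate estimate is this lower bound on $|z|$, which I expect to extract from integrating the ODE and using $\int_{u_0}^u \rho\, w \approx \int \dot z$.
\end{enumerate}

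I expect step (4), the degenerate case $f(u_0) - cg(u_0) = 0$, to be the principal difficulty, since there the simple algebraic extraction of $w$ from the ODE is insufficient and a genuine asymptotic balance between the vanishing rates of $D$ and $z$ must be controlled by comparison functions.
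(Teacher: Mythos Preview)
Your overall framework is right --- express $w=D/z$ via the ODE, use the sign condition to get $f(u_0)-cg(u_0)\le 0$, and then invoke comparison arguments --- and you correctly identify the real obstacle: $\dot z_+(u_0)=0$ does not give $\dot z(u)\to 0$. But your concrete comparison strategy in steps (3)--(4) does not work as stated.

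In step (3) you claim a strict sub-solution yields $|z(u)|\ge \delta(u-u_0)$. This is impossible under the hypothesis $\dot z_+(u_0)=0$, which forces $z(u)/(u-u_0)\to 0$. Worse, even if it held, $D(u)=o(u-u_0)$ (since $u_0\in D_{00}$) would then give $D/z\to 0$, contradicting the nonzero limit $(f(u_0)-cg(u_0))/\rho(u_0)$ that you are trying to prove. So in this case $z$ must vanish at the \emph{same} superlinear rate as $D$, not linearly, and your proposed bound points in the wrong direction. The heuristic $|z|\gtrsim |D|^{1/2}$ in step (4) is likewise not the right scale and is not what the argument needs.

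The paper's device is more direct and avoids estimating the vanishing order of $z$ separately: take comparison functions that are \emph{constant multiples of $D$ itself}. In the case $f(u_0)-cg(u_0)<0$, set
\[
\eta_{\pm\varepsilon}(u):=-\frac{\rho(u_0)}{cg(u_0)-f(u_0)\mp\varepsilon\rho(u_0)}\,D(u),
\]
so that $D/\eta_{\pm\varepsilon}$ is exactly the constant $(f(u_0)-cg(u_0))/\rho(u_0)\pm\varepsilon$. Because $\dot\eta_{\pm\varepsilon}(u_0)=0$ (here is where $\dot D(u_0)=0$ is used), one checks that $\eta_{+\varepsilon}$ is a strict upper-solution and $\eta_{-\varepsilon}$ a strict lower-solution on a one-sided neighbourhood. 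To start the comparison one uses a sequence $u_n\downarrow u_0$ with $\dot z(u_n)\to 0$ (which exists since $\dot z_+(u_0)=0$); along this sequence $D(u_n)/z(u_n)\to(f(u_0)-cg(u_0))/\rho(u_0)$ by the ODE, giving the needed ordering between $z$ and $\eta_{\pm\varepsilon}$ at some $u_n$, and Lemma~\ref{l:Gronwall} propagates it. The case $f(u_0)-cg(u_0)=0$ is handled the same way with $\omega_\varepsilon(u):=-D(u)/\varepsilon$, which is a strict lower-solution and yields $-\varepsilon<D/z<0$. The point is that choosing comparisons proportional to $D$ turns the bound on $z$ directly into a bound on the ratio $D/z$, sidestepping any separate asymptotic analysis of $z$.
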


\md 
\begin{proof}  We limit ourselves to give the proof for the limit as \(u\to u_0^+\), since the limit as \( u \to u_0^-\) can be treated by using the change of variable \( \zeta(u):=-z(\alpha+\beta-u)\) considered in Lemma \ref{l:negative}, for \((\alpha,\beta)=(0,1)\). Indeed, in this case the limit  \(\dis\lim_{u\to u_0^-} \frac{D(u)}{z(u)}\)  is replaced by the limit \(\dis\lim_{u\to v_0^+} \frac{\tilde D(u)}{\zeta(u)}\), where \(v_0:=1-u_0\), \(\tilde D(u)= -D(1-u)\) and \(\zeta(u)=-z(1-u)\).
 
\md
Let \(u_0\in D_{00}\) be fixed and assume that \(\dot z_+(u_0)=0\). Let \((u_n)_n\) be a decreasing sequence converging to \(u_0\) such that \(\dot z(u_n)\to 0\). 

Notice that by the differential equation in \eqref{eq:pr-h} we have
\beq \label{eq:frac zd}
 \frac{f(u)-cg(u)-\dot z(u)}{\rho(u)} = \frac{D(u)}{z(u)} <0 \quad \text{for every } u\in (0,1)\setminus D_0.
\eeq

 Therefore, since  \(\dot z(u_n)\to 0\), we deduce that 
\(f(u_0)-cg(u_0)\le 0\).

\md 
{\bf Case 1:}  \(f(u_0)-cg(u_0)=0\). 
\md

 Fixed a value \(\epsilon>0\), put 
\[
\omega_{\varepsilon}\left(u\right):=-\frac{D\left(u\right)}{\varepsilon}.
\]
Since \(\dot \omega_\epsilon (u_0)=0\) and \(\dis\lim_{u\to u_0^+} f(u)-cg(u)-\frac{h(u)}{\omega_\epsilon (u)}=\epsilon \rho(u_0)>0,\)
for some positive \(\delta^*=\delta^*(\varepsilon)\) we have
\[ \dot \omega_\epsilon (u) <  f(u)-cg(u)-\frac{h(u)}{\omega_\epsilon (u)} \quad \text{ for every } u\in (u_0, u_0+\delta^*) \]
that is \(\omega_\varepsilon\) is a strict lower-solution for equation in \eqref{eq:pr-h}.

 From \eqref{eq:frac zd} we deduce that
\(  D(u_n)/z(u_n)\to 0\). Therefore, for every  \(n\) sufficiently large we have 
\(D(u_{n})/z(u_{ n}) > -\varepsilon \), that is 
\beq \label{eq:omega1}
 \dfrac{\omega_\varepsilon (u_{n})}{z(u_{n})}<1 \quad \text{ for every } n \text{ large enough}. \eeq
Then, if \(D\) is positive in a right neighborhood of \(u_0\), then \(z\) is negative in the same neighborhood and we get 
\(z(\bar u)<\omega_\epsilon (\bar u)\) for some \(\bar u>u_0\) and recalling that \(\omega_\varepsilon\) is a lower-solution  we conclude that 
\(z(u)<\omega_\varepsilon(u)\) for every \(u\in (0, \bar u)\), that is 
\[ -\varepsilon < \frac{D(u)}{z(u)}<0 \quad \text{ for every } u\in (u_0, \bar u).\]

\md

Instead, if 
\(D\) is negative in a right neighborhood of \(u_0\), then \(z\) is positive in the same neighborhood and by \eqref{eq:omega1} we get the existence of a natural \(\bar n\) such that 
\[ z(u_n)> \omega_\epsilon  (u_n) \quad \text{ for every } n\ge \bar n.\]
This implies that for every \(n\ge \bar n\) we have 
 \(z(u)\ge  \omega_\epsilon  (u)\)  for every \(u\in (u_{n+1},u_n)\). Indeed, if \(z(\bar u)< \omega_\epsilon  (\bar u)\) for some \(\bar u\in (u_{n+1},u_n)\) and \(n\ge \bar n\), then there exists a value \(u^*\in (u_{n+1},\bar u)\) such that 
 \(z(u^*)= \omega_\varepsilon (u^*)\) and \(\omega_\varepsilon (u)> z(u)\) in a right neighborhood of \(u^*\), in contradiction with Lemma \ref{l:Gronwall}.
 
 \md
 Summarizing, whatever it is the sign of \(D\) in a right neighborhood of \(u_0\) we have 
 \[ \lim_{u \to u_0^+} \frac{D(u)}{z(u)}=0.\]
 
 \sm

\bg 
{\bf Case 2:} $f\left(u_{0}\right)-cg\left(u_{0}\right)<0$.

\md

 Taking account of \eqref{eq:frac zd}  we get 
\beq\label{eq:limun}
\frac{D(u_n)}{z(u_n)} \to \frac{ f(u_0)-cg(u_0)}{\rho(u_0)}<0 \quad \text{ as  } n\to +\infty.
\eeq

Fixed a positive \(\varepsilon< (cg(u_0)-f(u_0))/\rho(u_0)\),  let us define  
\[
\eta_{\varepsilon}\left(u\right):=-\frac{\rho\left(u_{0}\right)}{cg\left(u_{0}\right)-f\left(u_{0}\right)-\varepsilon\rho\left(u_{0}\right)}D\left(u\right).
\]

Since \(\dot \eta_\varepsilon (u_0)=0\) and 
\(\dis \lim_{u \to u_0^+} f(u)-cg(u)-\frac{h(u)}{\eta_\varepsilon(u)}= -\varepsilon \rho(u_0)<0\), we get that 
$\eta_{\varepsilon}$ is a strict upper-solution for the equation in \eqref{eq:pr-h} in \((u_0, u_0+\delta)\) for some \(\delta>0\).

On the other hand, from \eqref{eq:limun} we derive 
\beq\label{eq:limun2}
\frac{\eta_\eps(u_n)}{z(u_n)} \to \frac{cg\left(u_{0}\right)-f\left(u_{0}\right)}{cg\left(u_{0}\right)-f\left(u_{0}\right)-\varepsilon\rho\left(u_{0}\right)}>1 \quad \text{ as  } n\to +\infty.
\eeq

\sm
Therefore, if \(D\) is positive in a right neighborhood of \(u_0\) (hence \(z\) is negative) we infer that 
\(\eta_\eps(\tilde u)< z(\tilde u)\) for some \(\tilde u\in (u_0, u_0+\delta)\) and by applying Lemma \ref{l:Gronwall} we achieve that \(\eta_\eps(u)<z(u)\) in \((u_0,\tilde u)\), that is 
\[ \frac{D(u)}{z(u)} < \frac{ f(u_0)-cg(u_0)}{\rho(u_0)}+\eps \quad \text{ for every } u\in (u_0,\tilde u).  \]

Instead, if \(D\) is negative in a right neighborhood of \(u_0\) (hence \(z\) is positive), by \eqref{eq:limun2} we derive that \(\eta_\eps(u_n)> z(u_n)\) for every \(n\) large enough. So, by a similar argument to that used above, we get  that for every \(n\) sufficiently large we have 
 \(z(u)\le  \eta_\epsilon  (u)\)  for every \(u\in (u_{n+1},u_n)\). 
Therefore, again we derive for some \(\tilde \delta>0\)
\[ \frac{D(u)}{z(u)} \le \frac{ f(u_0)-cg(u_0)}{\rho(u_0)}+\eps \quad \text{ for every } u\in (u_0,u_0+\tilde \delta).  \]

Summarizing, whatever  it is the sign of \(D\) in a right neighborhood of \(u_0\) we have 
\beq \label{eq:limsup} \limsup_{u\to u_0^+}  \frac{D(u)}{z(u)} \le  \frac{ f(u_0)-cg(u_0)}{\rho(u_0)}.\eeq
 
Let us now study the liminf. In order to do this, replace \(\eps\) with \(-\eps\) in the definition of \(\eta_\eps\), that is consider now the function 

\[
\eta_{-\varepsilon}\left(u\right)=-\frac{\rho\left(u_{0}\right)}{cg\left(u_{0}\right)-f\left(u_{0}\right)+\varepsilon\rho\left(u_{0}\right)}D\left(u\right).
\]
By  arguing in a similar manner as in the previous case, we can show 
that $\eta_{-\varepsilon}\left(u\right)$ is a strict lower-solution
for the equation in  \eqref{eq:pr-h} and we have

\[\label{eq:limun3}
\frac{\eta_{-\eps}(u_n)}{z(u_n)} \to \frac{cg\left(u_{0}\right)-f\left(u_{0}\right)}{cg\left(u_{0}\right)-f\left(u_{0}\right)+\varepsilon\rho\left(u_{0}\right)}<1 \quad \text{ as  } n\to +\infty.
\]
instead of \eqref{eq:limun2}. Following the same  reasoning above developed we achieve that  
\[ \frac{\eta_{-\eps}(u)}{z(u)} <1 \quad \text{ for every } u\in (u_0, u_0+\sigma)\]
for some \(\sigma >0\), whatever it is the sign of \(D\) in a right neighborhhod of \(u_0\). Therefore 
\[
\frac{D\left(u\right)}{z_{c}\left(u\right)}>\frac{f\left(u_{0}\right)-cg\left(u_{0}\right)}{\rho\left(u_{0}\right)}-\varepsilon \quad \text{ for every } u\in (u_0, u_0+\sigma).
\]
Thus, 
\[ \liminf_{u\to u_0^+} \frac{D(u)}{z(u)} \ge \frac{f\left(u_{0}\right)-cg\left(u_{0}\right)}{\rho\left(u_{0}\right)}\]
which jointly with \eqref{eq:limsup} implies that 
\[ \lim_{u\to u_0^+} \frac{D(u)}{z(u)} = \frac{f\left(u_{0}\right)-cg\left(u_{0}\right)}{\rho\left(u_{0}\right)}.\]


\end{proof}

\bg
\begin{rem}\label{r:zc1}{\rm Taking account of Proposition \ref{p:limite} and the differential equation of problem \eqref{eq:pr-h}, we have that 
  if \(z\) is differentiable at some point \(u_0\in D_0\) then there exists the limit \(\dis\lim_{u \to u_0} \dot z(u)\). So, if \(z\) is a solution of problem \eqref{eq:ordine1} with \(\dot z_+(u)=\dot z_-(u)\) for every \(u\in D_0\), then it is a \(C^1\) function in \((0,1)\). }
\end{rem}

\bg
We are now ready to prove Theorem \ref{t:main2}.
\bg

{\em Proof of Theorem \ref{t:main2}.} \ 
By virtue of Theorem \ref{t:gen}, there exists a value \(c^*\) satisfying \eqref{eq:stimagen} such that problem \eqref{eq:ordine1} 
admits a solution \(z_c\) if and only if \(c\ge c^*\).
 Recalling that  \((0,1)\setminus D_0=  \bigcup_{k=1}^n (\alpha_k,\beta_k) \), put
\beq \label{d:c**} \hat c :=\max\left\{c^*, \max_{k\in K_0^-} \frac{f(\alpha_k)}{g(\alpha_k)} \right\}\eeq
(with \(\hat c =c^*\) if \(K_0^-=\emptyset\)). 

Let us
 fix \(c>\hat c \) and let \(z_c\) be a solution of problem \eqref{eq:ordine1} given by Theorem \ref{t:gen}. In force of Theorem \ref{t:equiv} we have to show that the function \(z_c/D\) admits a continuous extension on \((0,1)\), that is there exists finite the limit 
 \[ \lim_{u\to \alpha_k} \frac{z_c(\alpha_k)}{D(\alpha_k)} \quad \text{ for every } k=2,\cdots,n.\]
 
Let us fix an index \(k\ge 2\).
If \(\dot D(\alpha_k)\ne 0\), then  
there exists the limit
\[ \lim_{u \to \alpha_k} \frac{z_c(u)}{D(u)}= \frac{\dot z_c(\alpha_k)}{\dot D(\alpha_k)}.\]

Instead, if \(\dot D(\alpha_k)=0\), we have \(\dot h(\alpha_k)=0\) and 
then by \eqref{ip:nonzero}, \eqref{d:c**} and \eqref{eq:stimagen}  we deduce that \(f(\alpha_k)-cg(\alpha_k)<0\). 
  Therefore, from \eqref{eq:zpuntoh} we have \(\dot z_c(u_0)=0\). Then, by applying  Proposition \ref{p:limite} we derive
\[ \lim_{u \to \alpha_k} \frac{z_c(u)}{D(u)}= \frac{\rho(\alpha_k)}{f(\alpha_k) -cg(\alpha_k)}.\]
Therefore, for every \(c>\hat c \) there exist t.w.s. 

\md
Let su now take \(c<\hat c \). If \(c<c^*\) then the first order singular problem \eqref{eq:ordine1} has no solution, hence, a fortiori,  no t.w.s. exists by Theorem \ref{t:equiv}. Instead, if \(c^*\le c<  \hat c \), let \(z_c\) be a solution of problem \eqref{eq:ordine1} given by Theorem 
\ref{t:gen}.  By \eqref{d:c**} we get \(f(\alpha_k)-cg(\alpha_k)>0\) for some \(k\in K_0^-\). Then, by Proposition \ref{p:limite} we infer \(\dot z(\alpha_k)\ne 0\) and so
\[ \lim_{u \to \alpha_k} \left|\frac{z_c(u)}{D(u)}\right|= +\infty.\]
Then, the function \(z/D\) has not a continuous extension in \((0,1)\) and by virtue of  Theorem \ref{t:equiv} we get that no t.w.s. exists.

\hfill \(\Box\)

\begin{rem}\label{r:cstar-2}
{\em In view of what we observed in Remark \ref{r:cstar}, the existence of t.w.s. is not ensured when \(c=\hat c\). Indeed, since  \(z_{\hat c}\) is always right- and left-differentiable at \(u_0\),  when \(\dot D(u_0)\ne 0\), the limit of \(z_{\hat c}(u)/D(u)\) as \(u\to u_0\) exists finite if and only if there exists \(\dot z_{\hat c}(u_0^+) = \dot z_{\hat c}(u_0^-)\).
\newline
\sm
However, if one allows the equation to admit weak t.w.s. (see \cite{CM1}), then by similar techniques to those used in \cite{CM1} it is possible to show that weak t.w.s. exist also for \(c=\hat c\).  
 }
\end{rem}

Observe now that when \(K^+= \{1\}\), that is when \(D\) has an unique change of sign, from positive to negative, we can state the existence of t.w.s. also for \(c=\hat c\),  as in the following result.

\bg
\begin{cor}
\label{c:one}
Put  \(h(u):=D(u)\rho(u)\), suppose that \(h\) is differentiable at 0,1. Moreover, assume
that for some \(u_0\in (0,1)\) we have  \(D(u)(u-u_0)<0\) for every \(u\in (0,1)\setminus \{u_0\}\).

Moreover, suppose    \( g(0)>0\),  \(g(1)>0 \) and

\[
\int_{0}^u g(s)  \de s >0,\ \text{ for any } 0<u \le u_0; 
\quad \int_u^{1} g(s)  \de s >0,\ \text{ for any } u_0\le u <1. 
\]
Finally, in the case \(\dot D(u_0)=0\)  suppose also that \(g(u_0)>0\) and one of the following assumptions are satisfied:

\beq
\label{ip:u0}
\frac{f(u_0)}{g(u_0)}\ < \ \max\left\{ \frac{f(0)}{g(0)}, \frac{f(1)}{g(1)} \right\}
\eeq

or

\beq \label{ip:f0}
f(u) \equiv 0 \quad \text{ in } (0,u_0)  \quad \text{ or  } \quad f(u) \equiv 0 \quad \text{ in } (u_0,1).
\eeq

Then, there exists a value \(\hat c  \)  such that equation \eqref{eq:E}
admits t.w.s. if and only if  \(c\ge \hat c \).

\md
Moreover, 
we have
\[
\max\left\{ \frac{2\sqrt{\dot h(0)}+f(0)}{g(0)}  \ , \ \frac{2\sqrt{\dot h(1)}+f(1)}{g(1)} \right\} \le  \hat c \le \max\left\{ \frac{2\sqrt{H^+}+F^+}{G^+}  \ , \  \frac{2\sqrt{H^-}+F^-}{G^-}  \right\}.
\]
where

\[
\begin{array}{ll}
G^+:= \dis\inf_{u\in (0,u_0)} -\!\!\!\!\!\!\int_{0}^u g(s)  \de s, \quad & G^-:=\dis\inf_{u\in (u_0,1)} -\!\!\!\!\!\!\int_u^{1} g(s)  \de s \\ F^+:= \dis\sup_{u\in (0,u_0)} -\!\!\!\!\!\!\int_{0}^u f(s)  \de s, \quad & F^-:=\dis\sup_{u\in (u_0,1)} -\!\!\!\!\!\!\int_u^{1} f(s)  \de s \\
H^+:= \dis\sup_{u\in (0,u_0)} -\!\!\!\!\!\!\int_{0}^u \frac{h(s)}{s} \ \de s, \quad & H^-:=\dis\sup_{u\in (u_0,1)} -\!\!\!\!\!\!\int_u^{1} \frac{h(s)}{s-1} \  \de s. 
\end{array}\]

\end{cor}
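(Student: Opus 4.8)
The plan is to read this off from Theorem~\ref{t:main2}, the only genuinely new point being the admissibility of the endpoint speed $c=\hat c$, which is made possible by the single, correctly oriented sign change of $D$. Since $\rho>0$ on $(0,1)$, the product $h(u):=D(u)\rho(u)$ has the same sign as $D$, so $h>0$ on $(0,u_0)$ and $h<0$ on $(u_0,1)$; in the notation of \eqref{eq:hsplit}--\eqref{d:kpm} we have $n=2$, with $(\alpha_1,\beta_1)=(0,u_0)$ and $K^+=\{1\}$, $(\alpha_2,\beta_2)=(u_0,1)$ and $K^-=\{2\}$, and $D_0=\{u_0\}$. First I would verify the hypotheses of Theorem~\ref{t:main2}: \eqref{ip:g1} and \eqref{ip:g2} are exactly the assumptions on $g$ over $(0,u_0)$ and $(u_0,1)$, while \eqref{ip:nonzero} reduces to $g(u_0)>0$, which is assumed precisely when $u_0\in D_{00}$, i.e. when $\dot D(u_0)=0$. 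Theorem~\ref{t:main2} then already supplies a threshold $\hat c$ with t.w.s. for every $c>\hat c$ and none for $c<\hat c$, and the estimate \eqref{eq:stimagen3} collapses to the stated bounds once one notes that the extra term $f(u_0)/g(u_0)$ (present only when $\dot D(u_0)=0$) is $\le c^*$ and hence absorbed by the two maxima. It remains to produce a t.w.s. at $c=\hat c$.

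By Theorem~\ref{t:equiv}, a t.w.s. exists at $c=\hat c$ as soon as the solution $z_{\hat c}$ of \eqref{eq:pr-h} given by Theorem~\ref{t:gen} is such that $z_{\hat c}/D$ extends continuously to $(0,1)$; as $z_{\hat c}/D$ is already continuous off $D_0=\{u_0\}$, the whole matter is the existence of a finite limit of $z_{\hat c}(u)/D(u)$ at $u_0$. The crucial observation is that $u_0=\beta_1$ is the right endpoint of the positive interval and $u_0=\alpha_2$ is the left endpoint of the negative interval, so by \eqref{eq:puntoz} and \eqref{eq:puntozneg} both one-sided derivatives coincide, for every admissible $c$,
\[
\dot z_{\hat c}(u_0^-)=r_+(\hat c,u_0)=\dot z_{\hat c}(u_0^+).
\]
This is exactly the configuration excluded from the pathology of Remark~\ref{r:cstar}, which requires a $K^+$-interval lying to the right of a $K^-$-interval and is impossible when $K^+=\{1\}$. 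If $\dot D(u_0)\ne0$ (in particular if $D_{00}=\emptyset$) this settles the point: $z_{\hat c}$ is then differentiable at $u_0$ with $z_{\hat c}(u_0)=D(u_0)=0$, so $\lim_{u\to u_0}z_{\hat c}(u)/D(u)=r_+(\hat c,u_0)/\dot D(u_0)$ is finite and Theorem~\ref{t:equiv} yields the t.w.s.

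The delicate case is $\dot D(u_0)=0$, i.e. $u_0\in D_{00}$. Here $\dot h(u_0)=\rho(u_0)\,\dot D(u_0)=0$ (using $D(u_0)=\dot D(u_0)=0$ and the continuity of $\rho$), so by \eqref{d:rpm}, $\hat c\ge f(u_0)/g(u_0)$ and $g(u_0)>0$ we get $r_+(\hat c,u_0)=\tfrac12(f(u_0)-\hat c g(u_0)+|f(u_0)-\hat c g(u_0)|)=0$; hence $\dot z_{\hat c}(u_0^{\pm})=0$ and Proposition~\ref{p:limite} applies on each side, giving equal one-sided limits of $D/z_{\hat c}$ at $u_0$ which are \emph{nonzero} — so that $z_{\hat c}/D$ has the finite limit $\rho(u_0)/(f(u_0)-\hat c g(u_0))$ — precisely when $f(u_0)-\hat c g(u_0)<0$ strictly. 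Upgrading $\hat c\ge f(u_0)/g(u_0)$ to this strict inequality is the main obstacle, and it is resolved by \eqref{ip:u0} or \eqref{ip:f0}. Under \eqref{ip:u0} the lower bound in \eqref{eq:stimagen3}, together with $\dot h(0),\dot h(1)\ge0$, gives $\hat c=c^*\ge\max\{f(0)/g(0),\,f(1)/g(1)\}>f(u_0)/g(u_0)$. Under \eqref{ip:f0}, say $f\equiv0$ on $(0,u_0)$ (so $f(u_0)=0$), I would rule out $c=0$ on the positive interval by an energy argument: a solution of \eqref{eq:pr-h} there with $c=0$, $f\equiv0$ satisfies $(z^2)'=-2h$, whence $z(u)^2=2\int_u^{u_0}h(s)\,\de s$, and letting $u\to0^+$ contradicts $z(0^+)=0$ since $\int_0^{u_0}h(s)\,\de s>0$; thus the threshold on $(0,u_0)$ is strictly positive and $\hat c=c^*>0=f(u_0)/g(u_0)$ (the case $f\equiv0$ on $(u_0,1)$ is symmetric via Lemma~\ref{l:negative}). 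In either case $f(u_0)-\hat c g(u_0)<0$, the limit of $z_{\hat c}/D$ at $u_0$ is finite, and Theorem~\ref{t:equiv} produces a t.w.s. at $c=\hat c$; combined with Theorem~\ref{t:main2}, t.w.s. exist if and only if $c\ge\hat c$.
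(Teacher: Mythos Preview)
Your proof is correct and follows essentially the same route as the paper: invoke Theorem~\ref{t:main2} for $c\ne\hat c$, then for $c=\hat c$ use that both one-sided derivatives of $z_{\hat c}$ at $u_0$ equal $r_+(\hat c,u_0)$ (since $u_0$ is the \emph{right} end of the $K^+$-interval and the \emph{left} end of the $K^-$-interval), handle $\dot D(u_0)\ne0$ by L'H\^opital, and in the degenerate case $\dot D(u_0)=0$ establish $f(u_0)-\hat c g(u_0)<0$ and apply Proposition~\ref{p:limite}. The only difference is in the treatment of \eqref{ip:f0}: the paper rules out $c=0$ by the sign argument $\dot z=-\rho D/z>0$ on $(0,u_0)$ (incompatible with $z<0$ and $z(0^+)=0$), while you use the equivalent energy identity $(z^2)'=-2h$ integrated over $(0,u_0)$; both are one-line computations leading to the same conclusion $\hat c>0$.
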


\begin{proof}

In view of Theorem \ref{t:main2}, we have only to prove that 
there exists a t.w.s. also for \(c=\hat c \),  that is the limit \(\dis\lim_{u\to u_0}  \frac{z_{\hat c}(u)}{D(u)}\) exists finite.

Observe that by  \eqref{eq:puntoz} and \eqref{eq:puntozneg} we have \(\dot z_{\hat c }(u_0)= r_+(\hat c , u_0)\).
So, if \(\dot D(u_0)\ne 0\) we get 
\[
\lim_{u \to u_0} \frac{z_{\hat c }(u)}{D(u)} = \frac{\dot z_{\hat c}(u_0)}{\dot D(u_0)} \in \RR.
\]

Otherwise, if \(\dot D(u_0)=0\) then \(K_0^-\ne \emptyset\) so  \(\hat c = \max \{c^*,  f(u_0)/g(u_0)\}\). Therefore, since \(\dot h(u_0)=0\), by \eqref{eq:puntoz} and \eqref{eq:puntozneg} we have \(\dot z_{\hat c}(u_0)=0\). 

If   \eqref{ip:u0} is satisfied, then by \eqref{eq:stimagen} we have \(f(u_0)-\hat c  g(u_0)<0\), so by applying Proposition \ref{p:limite} we get

\beq \label{eq:limite finito}
\lim_{u \to u_0} \frac{z_{\hat c }(u)}{D(u)} = \frac{\rho(u_0)}{f(u_0)-\hat c g(u_0)} \in \RR.
\eeq

Otherwise, if \eqref{ip:f0} holds, then necessarily \(\hat c>0\). Indeed, if \(\hat c=0\) and \(f(u)\equiv 0\) in \((0,u_0)\) [\((u_0,1)\)], then by the differential equation in \eqref{eq:ordine1} we obtain 
\[ \dot z_{\hat c}(u)=-\frac{\rho(u)D(u)}{z(u)}>0 \quad \text{for every } u\in (0,u_0)\ [(u_0,1)]\]
in contradiction with the prescribed sign of \(z_{\hat c}\) and the boundary condition \(z(0^+)=0\) [\(z(1^-)=0\)]. Therefore, also in this case we have \(f(u_0)-\hat c g(u_0)=-\hat c g(u_0)<0\) and we can apply again Proposition \ref{p:limite} and achieve \eqref{eq:limite finito}.

\md

Summarizing, the function \(z_{\hat c }/D\) admits a continuous extension in \((0,1)\) and there exists t.w.s. also for \(c=\hat c \).

\sm
Finally, as for estimate of \(\hat c\), note that if \(\dot D(u_0)\ne 0\), then \(K_0^-=\emptyset\) and \eqref{eq:stimagen3} becomes the present estimate. Instead, if \(\dot D(u_0)=0\) and \eqref{ip:u0} or \eqref{ip:f0} hold, then again \eqref{eq:stimagen3} becomes the present one.

\end{proof}

\section{Classification of the t.w.s.}

\md

In this Section we provide a classification of the t.w.s. distinguishing between  classical t.w.s. and sharp ones.
Recall that the classification can be done considering the maximal existence interval \((a,b)\): the t.w.s. are classical if \(a=-\infty,\ b=+\infty\); sharp of type 1 if \(a=-\infty, \ b\in \RR\), sharp of type 2  if \(a\in \RR, \ b=+\infty\), and finally sharp of type 3 if both \(a,b\in \RR\) (see Section \ref{s:preliminary}).  Moreover, the extrema \(a, b\) are finite or infinite according to the values \(u'(a^+)\) and \(u'(b^-)\), that is if these are negative or null. Therefore, in view of Theorem \ref{t:equiv}, we  need to know the behavior of the function \(z_c(u)/D(u)\) as \(u\to 0\) and \(u\to 1\). This is not trivial when \(D(0)=\dot D(0)=\dot z(0)=0\) or  \(D(1)=\dot D(1)=\dot z(1)=0\). The following result concerns just this topic.

\bg

\begin{prop}\label{p:limite0}
Let all the assumptions of Theorem \ref{t:gen} be satisfied, for \(h(u):=D(u)\rho(u)\). Let \(z\) be a solution of problem \eqref{eq:ordine1} for some  \(c\ge c^*\).

 Then, if \( \dot z(0)=D(0)=\dot D(0)=0\) 
we necessarily have \(f(0)-cg(0)\le 0\). 

Moreover, if \(f(0)-cg(0)<0\) then 
\[ \lim_{u\to 0^+}  \ \frac{z(u)}{D(u)} = 0.\] 

Similarly, if  \( \dot z(1)=D(1)=\dot D(1)=0\) 
we necessarily have \(f(1)-cg(1)\le 0\). 

Moreover, if \(f(1)-cg(1)<0\) then 
\[ \lim_{u\to 1^-}  \ \frac{z(u)}{D(u)} = 0.\]

\end{prop}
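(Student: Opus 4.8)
The plan is to establish the necessity of $f(0)-cg(0)\le 0$ first, and then to prove the limit statement by constructing an appropriate comparison function (a lower- or upper-solution) as in the proof of Proposition \ref{p:limite}. Since the behaviour at $u=1$ is symmetric to that at $u=0$ via the change of variable $\zeta(u):=-z(1-u)$ of Lemma \ref{l:negative} (applied on $(0,1)$), I would prove only the assertions at $u=0$ and remark that the case $u\to 1^-$ follows analogously. Note that the key structural difference from Proposition \ref{p:limite} is that here $u_0=0$ is a boundary equilibrium where $z(0^+)=0$, so only a one-sided (right) neighborhood is available and the boundary condition $z(0^+)=0$ replaces the interior sign information.

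\medskip

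For the necessity, I would start from the differential equation of \eqref{eq:ordine1}, rewritten as in \eqref{eq:frac zd}:
\[
\frac{f(u)-cg(u)-\dot z(u)}{\rho(u)} = \frac{D(u)}{z(u)} < 0 \quad \text{ for } u\in (0,1)\setminus D_0.
\]
Since $\dot z(0)=0$, I would pick a sequence $u_n\to 0^+$ with $\dot z(u_n)\to 0$ (here, because $0$ is an endpoint, $\dot z(0)$ is the one-sided right derivative), and pass to the limit; the strict inequality gives $f(0)-cg(0)\le 0$ in the limit, using continuity of $f,g,\rho$ and $\rho(0)=0$ — this last point requires a little care since the denominator $\rho(u)$ vanishes at $0$, but the sign of $D(u)/z(u)$ is what is being exploited, so I would argue directly that $f(u_n)-cg(u_n)-\dot z(u_n)$ keeps constant (negative) sign along the sequence and conclude $f(0)-cg(0)\le 0$.

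\medskip

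For the limit statement under $f(0)-cg(0)<0$, I would mimic Case 1 of the proof of Proposition \ref{p:limite} but with the simplification that we expect the limit to be $0$ rather than a nonzero ratio. I would define a one-parameter family of comparison functions proportional to $D$, namely $\omega_\varepsilon(u):=-D(u)/\varepsilon$ (or a sign-adjusted variant), verify that $\dot\omega_\varepsilon(0)=0$ together with $D(0)=\dot D(0)=0$ makes $\omega_\varepsilon$ a strict lower- or upper-solution of the equation in \eqref{eq:ordine1} on a right neighborhood of $0$ (the sign of $f(0)-cg(0)<0$ is what forces the strict inequality in the correct direction), and then invoke Lemma \ref{l:Gronwall} to trap $z$ between $0$ and $\omega_\varepsilon$ near $0$. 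Squeezing as $\varepsilon\to 0^+$ and distinguishing the two possible signs of $D$ in a right neighborhood of $0$ (with $z$ having the opposite sign by the constraint $z(u)D(u)<0$) yields $\lim_{u\to 0^+} z(u)/D(u)=0$.

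\medskip

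I expect the main obstacle to be the comparison-function bookkeeping near the boundary: unlike the interior point $u_0\in D_{00}$ of Proposition \ref{p:limite}, here one only has a right neighborhood and must lean on the boundary datum $z(0^+)=0$ to anchor the inequalities, so some care is needed to ensure the constructed $\omega_\varepsilon$ is genuinely a strict sub/super-solution on an interval $(0,\delta^*(\varepsilon))$ and that the Gronwall comparison can be started from a point arbitrarily close to $0$. The sign analysis splitting into $D>0$ (so $z<0$) and $D<0$ (so $z>0$) in a right neighborhood of $0$ must be carried out in both cases exactly as in the proof of Proposition \ref{p:limite}, and I would reference that proof to avoid repeating the routine estimates.
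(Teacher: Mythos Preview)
Your overall strategy---reduce to the case $u\to 0^+$ by Lemma \ref{l:negative}, argue necessity from the sign identity \eqref{eq:frac zd} along a sequence $u_n\to 0^+$ with $\dot z(u_n)\to 0$, then build a comparison function proportional to $D$ and use Lemma \ref{l:Gronwall}---is exactly what the paper does. Your necessity argument is in fact slightly cleaner than the paper's, which splits according to the sign of $D$ near $0$ and, in the case $D<0$, invokes the explicit formula \eqref{eq:puntozneg} rather than the sign argument you propose.

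There is, however, a genuine error in your choice of comparison function. You take $\omega_\varepsilon(u):=-D(u)/\varepsilon$, which is the function from \emph{Case 1} of Proposition \ref{p:limite}; but there the target limit was $D/z\to 0$, whereas here you need $z/D\to 0$. Trapping $z$ between $0$ and $-D/\varepsilon$ only yields $|z/D|<1/\varepsilon$, which gives nothing as $\varepsilon\to 0^+$. The correct scaling (and the one the paper uses) is $\psi_\varepsilon(u):=-\varepsilon D(u)$: one checks that $\dot\psi_\varepsilon(u)\to 0$ while $f(u)-cg(u)-h(u)/\psi_\varepsilon(u)=f(u)-cg(u)+\rho(u)/\varepsilon\to f(0)-cg(0)<0$, so $\psi_\varepsilon$ is a strict \emph{upper}-solution on some $(0,\delta)$. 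Then the sequence $u_n$ (along which $z(u_n)/D(u_n)\to 0$) provides a starting point where $z$ lies on the correct side of $\psi_\varepsilon$, and Lemma \ref{l:Gronwall} propagates the inequality to give $-\varepsilon<z(u)/D(u)<0$ near $0$. The issue is the scaling, not the sign, so your parenthetical ``sign-adjusted variant'' would not have saved the argument. With $\psi_\varepsilon=-\varepsilon D$ in place of $\omega_\varepsilon$, your plan goes through and coincides with the paper's proof; the boundary-anchoring concern you raised is handled exactly via the sequence $u_n$ (when $D>0$ near $0$) or directly from $z(0^+)=\psi_\varepsilon(0^+)=0$ together with forward comparison (when $D<0$ near $0$).
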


\begin{proof}
Let us prove the assertion concerning the limit as \(u\to 0^+\).

\md
Let us first consider the case \(D(u)>0\) in a right neighborhood  \(0\). Then,  necessarily \(f(0)-cg(0)\le 0\). Indeed, otherwise, from the differential equation in \eqref{eq:ordine1} we would have 
\(\dot z(u)>0\) in \((0,\delta)\) for some \(\delta>0\), in contradiction with the prescribed negative sign of \(z\) in a right neighborhood of \(0\).

So, assume now \(f(0)-cg(0)< 0\).
Let us fix \(\varepsilon>0\) and put \(\psi_\varepsilon(u):= -\varepsilon  D(u)\). Then, we have
\[ \lim_{u\to 0^+} \left( f(u)-cg(u) -\frac{D(u)\rho(u)}{\psi_\varepsilon (u)}\right) =f(0)-cg(0)<0. \]
Moreover, since \(\dot \psi_\varepsilon (u)=-\varepsilon \dot D(u)\to 0\) as \(u\to 0^+\), there exists a value \(\delta=\delta_\varepsilon>0\) such that 
\beq \label{eq:sopra}
\dot \psi_\varepsilon(u)> f(u)-cg(u) -\frac{D(u)\rho(u)}{\psi_\varepsilon (u)} \quad \text{ whenever } 0<u<\delta 
\eeq
So, \(\psi_\varepsilon\) is an upper-solution.

Since
 \(\dot z(0)=0\), we can choose a sequence \(u_n \to 0^+\) such that \(\dot z(u_n)\to 0\), so
\[ \frac{z(u_n)}{D(u_n)}=\frac{\rho(u_n)}{f(u_n)-cg(u_n)-\dot z(u_n)} \to 0 \]
and then 
  for a natural \(n^*\) sufficiently large we have
\[ z(u_{n^*})> \psi_\varepsilon(u_{n^*}).\]

 Therefore, taking \eqref{eq:sopra} into account, we derive \(z(u)> \psi_\varepsilon (u)=-\varepsilon D(u)\)
for every \(u\in (0,u_{n^*}]\) (see Lemma \ref{l:Gronwall})  and then 
\[ -\varepsilon < \frac{z(u)}{D(u)}< 0 \quad \text{ for every } u\in (0,u_{n^*}].\]
that is  the assertion.

\md
Assume now \(D(u)<0\) in a right neighborhood of \(0\).
In this case, by \eqref{eq:puntozneg} we have (see also \eqref{d:rpm}) \(\dot z(0)=\max\{f(0)-cg(0),0\}\), so being \(\dot z(0)=0\), we again infer \(f(0)-cg(0)\le 0\).

Moreover, if \(f(0)-cg(0)< 0\), the function \(\psi_\varepsilon\) above defined is again a strict upper-solution in a right neighborhood of \(0\). So, being \(z(0)=\psi_\varepsilon(0)=0\) we can apply again Lemma \ref{l:Gronwall} to obtain 
\(z(u)<\psi_\varepsilon(u)\), that is \(\dis \frac{z(u)}{D(u)}>-\varepsilon\) in the same right neighborhood and again we deduce the assertion.

\md
The proof regarding the limit as \(u\to 1^-\) can be derived by the change of variable considered in Lemma \ref{l:negative}.

\end{proof}

\md 
We now have all the tools to classify the t.w.s. 
From now on we will assume there exist, finite or not, the limits

\[ 
\lim_{u\to 0^+} \dot D(u):=\dot D(0^+), \qquad \lim_{u\to 1^-} \dot D(u):=\dot D(1^-).
\]
Moreover, assume there exist finite
\beq
\label{ip:doth}
\lim_{u\to 0^+} \frac{D(u)\rho(u)}{u}:=\ell_0, \qquad \lim_{u\to 1^-} \frac{D(u)\rho(u)}{u-1}:=\ell_1.
\eeq

\md

The following result provides a classification for the t.w.s. having speed \(c>\hat c\).

\md
\begin{prop}\label{p:class}  Under the same assumptions of Theorem \ref{t:main2}, let 
 \(u\) be a t.w.s. of equation
\[ \left(D\left(u\right(t))u^{\prime}(t)\right)^{\prime}+\left(cg\left(u(t)\right)-f(u(t))\right)u^{\prime}(t)+\rho(u(t))=0\] 
defined in its maximal existence interval \((a,b)\), for some \(c>\hat c \), where \(\hat c \) is the threshold wave speed given by Theorem \ref{t:main2}.

Then,
\begin{itemize}
\item if \(D(u)>0\) in \((0,\delta)\) for some \(\delta>0\),  then \(b=+\infty\);

\item if \(D(u)<0\) in \((0,\delta)\) for some \(\delta>0\) and 
the three values \(f(0)-cg(0)\), \(D(0)\), \(\dot D(0)\) do not vanish simultaneously, then 
\[ b\in \RR \quad \text{ if and only if }  \quad D(0)=0, \ \dot D(0^+)>-\infty \ \text{ and } \ f(0)-cg(0)>0.\]

\end{itemize}

\md
Moreover, 

\begin{itemize}
\item if \(D(u)<0\)   in \((1-\delta,1)\) for some \(\delta>0\), then \(a=-\infty\);

\item if \(D(u)>0\)  in \((1-\delta,1)\) for some \(\delta>0\) and 
the three values \(f(1)-cg(1)\), \(D(1)\), \(\dot D(1)\) do not vanish simultaneously, 
\[ a\in \RR \quad \text{ if and only if }  \quad D(1)=0, \ \dot D(1^-)>-\infty \ \text{ and } \ f(1)-cg(1)>0.\]

\end{itemize}

\end{prop}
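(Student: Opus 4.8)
The plan is to pass to the first-order description of Theorem \ref{t:equiv}. For $c>\hat c$ the t.w.s. $u$ is the strictly decreasing solution of $u'=\phi(u)$, where $\phi$ is the continuous extension of $z_c/D$ on $(0,1)$ produced in the proof of Theorem \ref{t:main2}; since $u(b^-)=0$, the slope at the right end is
\[
u'(b^-)=\lim_{t\to b^-}u'(t)=\lim_{u\to0^+}\frac{z_c(u)}{D(u)}=:\phi(0^+).
\]
Everything reduces to the dichotomy
\[
b=+\infty\iff \phi(0^+)=0,\qquad b\in\RR\iff \phi(0^+)\neq0,
\]
because when $\phi(0^+)=0$ the profile meets the equilibrium with vanishing slope and the extension by the constant $0$ is a $C^1$ t.w.s. on a larger interval (forcing $b=+\infty$ by maximality), whereas when $\phi(0^+)\in(-\infty,0)$ [resp. $\phi(0^+)=-\infty$] the profile reaches $0$ in finite time with a corner [resp. a vertical tangent] and admits no $C^1$-continuation, so $b\in\RR$. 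The two statements at the left endpoint follow from those at the right one through the reflection $u\mapsto1-u$ of Lemma \ref{l:negative}, which swaps the equilibria and sends $D$ to $-D(1-\cdot)$ while leaving $f,g$ essentially fixed; hence $D<0$ near $1$ corresponds to $D>0$ near $0$ (always classical) and $D>0$ near $1$ to $D<0$ near $0$ (the sharp-admissible case).

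It then suffices to evaluate $\phi(0^+)$, and the sign of $D$ near $0$ dictates the computation. Suppose first $D>0$ in $(0,\delta)$, so $z_c<0$. If $D(0)>0$ then $\phi(0^+)=z_c(0)/D(0)=0$ at once. If $D(0)=0$ then $\dot h(0)=\ell_0=0$, so \eqref{eq:zpuntoh} gives $\dot z_c(0)=r_+(c,0)=\max\{f(0)-cg(0),0\}$; since $0=\alpha_1\in K^+$, \eqref{ip:g1} yields $g(0)>0$ and the lower bound in \eqref{eq:stimagen} gives $c>\hat c\ge c^*\ge f(0)/g(0)$, whence $f(0)-cg(0)<0$ and $\dot z_c(0)=0$. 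Consequently $z_c=o(u)$, and $\phi(0^+)=0$ in each sub-case: by l'Hospital's rule when $\dot D(0^+)$ is finite and nonzero, by Proposition \ref{p:limite0} when $\dot D(0^+)=0$, and because $|D(u)|/u\to+\infty$ dominates $z_c=o(u)$ when $\dot D(0^+)=+\infty$. This proves the first bullet, $b=+\infty$.

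The substance is the second bullet, where $D<0$ in $(0,\delta)$ and $z_c>0$. When $D(0)<0$ one has $\phi(0^+)=0$ and $b=+\infty$, so assume $D(0)=0$; then again $\dot h(0)=\ell_0=0$ and $\dot z_c(0)=r_+(c,0)=\max\{f(0)-cg(0),0\}$. If $f(0)-cg(0)\le0$ then $\dot z_c(0)=0$ and exactly as above $\phi(0^+)=0$, so $b=+\infty$ (the degenerate coincidence $D(0)=\dot D(0)=f(0)-cg(0)=0$ is ruled out by the non-degeneracy hypothesis, so Proposition \ref{p:limite0} is applicable whenever it is invoked). If $f(0)-cg(0)>0$ then $\dot z_c(0)=f(0)-cg(0)>0$ and $z_c\sim\dot z_c(0)\,u$, so $\phi(0^+)$ is decided by the vanishing rate of $D$: for $\dot D(0^+)\in(-\infty,0)$ l'Hospital's rule gives $\phi(0^+)=\dot z_c(0)/\dot D(0^+)\in(-\infty,0)$, and for $\dot D(0^+)=0$ one gets $\phi(0^+)=-\infty$; in both cases $\phi(0^+)\neq0$, so $b\in\RR$. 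Conversely, if $\dot D(0^+)=-\infty$ then $u/|D(u)|\to0$, whence $\phi(0^+)=0$ and $b=+\infty$. Assembling the cases, $\phi(0^+)\neq0$ exactly when $D(0)=0$, $\dot D(0^+)>-\infty$ and $f(0)-cg(0)>0$, which is the asserted characterization.

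The main obstacle is the rigorous justification of the opening dichotomy at the level of the second-order profile, since the naive equivalence ``finite hitting time $\Leftrightarrow$ nonzero slope'' is false: one must verify that when $\phi(0^+)=0$ the constant continuation genuinely satisfies Definition \ref{d:tws}, in particular that $(D\circ u)\,u'=z_c(u(\cdot))$ remains $C^1$ across the gluing point (which follows from $z_c(0)=0$, $\rho(0)=0$ and the existence of $\lim\dot z_c$), so that the maximal interval is indeed forced to be $\RR$; and that when $\phi(0^+)=-\infty$ the profile still reaches $0$ in finite time, because $\int_0 du/|\phi(u)|<\infty$ once $1/|\phi|\to0$. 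These points, together with the rate comparisons between $z_c$ and $D$ and the borderline limits handled by Proposition \ref{p:limite0}, are where the real work lies; granting them, the four assertions are read off from the case analysis above, the statements at $u=1$ being obtained from those at $u=0$ by the reflection of Lemma \ref{l:negative}.
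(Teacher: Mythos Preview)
Your approach is essentially the paper's: reduce to the behaviour of $\phi(0^+)=\lim_{u\to0^+}z_c(u)/D(u)$, use \eqref{eq:zpuntoh} and the lower bound in \eqref{eq:stimagen} to compute $\dot z_c(0)$, then split cases on $\dot D(0^+)$ and invoke Proposition~\ref{p:limite0} in the degenerate sub-case. The overall structure is correct.

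There is, however, a small logical slip in the ordering of your case analysis. You write ``If $D(0)=0$ then $\dot h(0)=\ell_0=0$'' and then deduce $\dot z_c(0)=0$, $z_c=o(u)$, \emph{before} splitting on the value of $\dot D(0^+)$. But $D(0)=0$ alone does not force $\ell_0=0$: for instance $D(u)=\sqrt{u}$, $\rho(u)=\sqrt{u}(1-u)$ gives $D(0)=0$ yet $\dot h(0)=1$. The implication $D(0)=0\Rightarrow\ell_0=0$ holds only when $\dot D(0^+)$ is finite (then $D(u)/u$ is bounded and $\rho(u)\to0$). The paper avoids this by disposing of the case $\dot D(0^+)=\pm\infty$ \emph{first}, using only that $z_c$ is differentiable at $0$ (so $z_c=O(u)$, which together with $|D(u)|/u\to\infty$ already gives $z_c/D\to0$), and only afterwards assumes $\dot D(0^+)$ finite to obtain $\ell_0=0$. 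Your conclusion in the sub-case $\dot D(0^+)=+\infty$ is still correct, but the justification should read ``$z_c=O(u)$'' rather than ``$z_c=o(u)$'', and that $O(u)$ comes merely from differentiability of $z_c$ at $0$, not from $\dot z_c(0)=0$. The same remark applies verbatim in Case~2 when $\dot D(0^+)=-\infty$. With this reordering your argument matches the paper's.
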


\md
\begin{proof}
\md
First of all, notice that
 by \eqref{ip:doth} we have that \(z\) is differentiable at \(0\) and \(1\) as a consequence of Theorems \ref{t:pos} and \ref{t:neg}. Moreover, 
 by \eqref{eq:stimagen} for every \(c>c^*\) we have 
\beq
\label{eq:climit1}
f(\alpha_k)+2\sqrt{\dot h(\alpha_k)}-cg(\alpha_k)<0  \quad \text{ for every } k\in K^+
\eeq
and
\[
f(\beta_k)+2\sqrt{\dot h(\beta_k)}-cg(\beta_k)<0  \quad \text{ for every } k\in K^-.
\]

\md
Of course, if  \(D(0)\ne 0\) then  \(z(u)/D(u)\to 0\) as \(u\to 0^+\) and this is true also when  \(D(0)=0\) and \(\dot D(0^+)=\pm \infty\), owing to  the differentiability of \(z\). 
 So, from now on let us assume \(D(0)=0\) and \(\dot D(0^+)\) is finite (hence \(\dot D(0^+)=\dot D(0)\)). In this case, in \eqref{ip:doth} we have \(\ell_0=0\) and from 
 \eqref{eq:zpuntoh} we deduce
\beq\label{eq:zpunto0} \dot z(0)= \max\{0, f(0)-cg(0)\}.
\eeq 
 
\sm
We now split the proof into two cases.

\md
\(\bullet\) \ {\em Case 1}: \(D\) is positive in a right neighborhood of \(0\).

\md
In this case,  by \eqref{eq:climit1} we have
\beq \label{eq:limit0}
f(0) - cg(0)<0.
\eeq

 Then, by  \eqref{eq:zpunto0} 
 we have \(\dot z(0)= 0\). 
 
 Hence, if \(\dot D(0)\ne 0\) 
 there exists the limit \(\dis\lim_{u \to 0^+} \frac{z(u)}{D(u)}=\frac{\dot z(0)}{\dot D(0)}=0\). Instead, if \(\dot D(0)=0\) then by \eqref{eq:limit0} we can apply Proposition \ref{p:limite0} to infer \(\dis\lim_{u \to 0^+} \frac{z(u)}{D(u)}=0\).

Summarizing, when \(D\) is positive in a right neighborhood of \(0\) and \(c>\hat c\) we have \(\dis\lim_{t\to b^-}u'(t)= \dis\lim_{u \to 0^+} \frac{z(u)}{D(u)}=0\) and this means that \(b=+\infty\) (since \((a,b)\) is the maximal existence interval of the solution).

\md
\(\bullet\) \ {\em Case 2}: \(D\) is negative in a right neighborhood of \(0\).

\md
First consider the case \(f(0)-cg(0)>0\). By \eqref{eq:zpunto0} we have \(\dot z(0)=f(0)-cg(0)\). Then we have
\[ \lim_{u \to 0^+} \dfrac{z(u)}{D(u)}  = \begin{cases} \dfrac{f(0)-cg(0)}{\dot D(0)} & \text{ if } \dot D(0) \ne 0 \\ -\infty & \text{ if } \dot D(0)=0. \end{cases}\]
Therefore, when \(f(0)-cg(0)>0\) we have \(\dis \lim_{u \to 0^+} \dfrac{z(u)}{D(u)} <0\), implying that \(b<+\infty\).

Instead, if \(f(0)-cg(0)\le 0\) then \(\dot z(0)=0\) and 
\[ \lim_{u \to 0^+} \dfrac{z(u)}{D(u)}=0 \quad \text{whenever } \dot D(0)\ne 0.\]
So, we get \(b=+\infty\) if \(f(0)-cg(0)\le 0\) and \(\dot D(0)\ne 0\).

\sm
Finally, let us consider the last case
 \(f(0)-cg(0)<0\) and \(\dot D(0)=0\). By \eqref{eq:zpunto0} we infer that \(\dot z(0)=0\) and by applying Proposition \eqref{p:limite0} we achieve \(\dis \lim_{u \to 0^+} \dfrac{z(u)}{D(u)} =0\), implying again that \(b=+\infty\).
 
\sm
This concludes the proof of the statement regarding the extremum \(b\).

\md The proof concerning the extremum \(a\) can be derived by means of the change of variable given by Lemma \ref{l:negative}. 
 
 \end{proof}

\md
\begin{rem}\label{r:c=}
{\em If equation \eqref{eq:E} admits t.w.s. for \(c=\hat c\) (see Remarks \ref{r:cstar}, \ref{r:cstar-2} and Corollary \ref{c:one}), then it is  possible to classify the t.w.s. also for \(c=\hat c\).
We avoid to give here a complete classification for \(c=\hat c\)  since it involves the subdivision into many different cases. However, it is possible to 
classify the t.w.s. (according to the specific situation under investigation), reasoning as in Proposition \ref{p:class} by applying Proposition \eqref{p:limite0}  
when \(D(0)=\dot D(0)=\dot z(0)=0\), which is the only non-trivial case.}
\end{rem} 
 
\begin{rem}\label{r:gneg}{\em
Similarly to what we observed in \cite[Remark 4]{CaMaPa}, if \(g\) is "predominantly" negative, we can verify if  the opposite function \(-g\) satisfies the assumptions of the present results. If it does, then we can replace \(c\) with \(-c\) and obtain the existence of a threshold value \(c^{**}\) such that the t.w.s. exists for \(c<c^{**}\) and they do not exist for \(c>c^{**}\) and deduce all the other results too.}
 \end{rem}

\section{Examples}
In this section we present some examples to which our general results can be applied.

\bigskip
{\bf Example 1.} 
Consider the equation 
\[
\left(u^2-u +\mathcal{ K}\right)u_{\tau}=\left(\left(3/4-u\right)\sqrt{u-u^2}\ u_{x}\right)_{x}+\sqrt{u-u^2}
\]
in which
\[
g(u):=u^2-u+\mathcal{ K}\ , \quad
f\left(u\right)\equiv0
\]
\[
D(u):=\left(3/4-u\right)\sqrt{u-u^2} \ , \quad
\rho\left(u\right):=\sqrt{u-u^2}
\]
where $\mathcal{ K}>3/16$ is a fixed number. Of course, assumptions \eqref{ip:funz}, \eqref{eq:sign rho} and \eqref{ip:Dfinite} are satisfied. Moreover, 
taking $h(u):=D(u)\rho(u)= u(1-u)(\frac34-u)$, it is  continuous
in $\left[0,1\right]$ and differentiable at $0,1$. Moreover,  $h(u)$ is positive
if $0<u<3/4$, negative if $3/4<u<1$. 

Note that  $g(0)=g(1)=\mathcal{ K}>0$ and 
\[ \dis\int_{0}^{u}g(s) \de s=\frac16u(2u^2-3u+6\mathcal{ K})>0 \quad \text{
for all }  u\in\left[0,3/4\right]\]
since $\mathcal{ K}>\frac{3}{16}$. Moreover,  $\dis \int_{u}^{1}g(s)\de s>0$ for all $u\in\left[3/4,1\right]$, since $g$ is positive in this interval.
However, when \(\frac{3}{16}< \mathcal{ K}< \frac14\), \(g\) assumes also negative values. 

Finally,
we observe that $D_{00}$ and $K_{0}^{-}$ are empty. 

So, by Corollary \ref{c:one}, we can conclude
that there exists $\hat{c}$ such that equation \eqref{eq:E} admits t.w.s.
if and only if $c\geq \hat{c}$. Furthermore, we have 
\(\dot h(0)=\frac{3}{4},\, \dot h(1)= \frac{1}{4}\), and by simple computations one can verify that
\[
\sup_{u\in\left(3/4,1\right)}-\!\!\!\!\!\!\int_{u}^{1}\frac{h(s)}{s-1}\de s=\frac{1}{4}, \quad \sup_{u\in\left(0,3/4\right)}-\!\!\!\!\!\!\int_{0}^{u}\frac{h(s)}{s}\de s=\frac{3}{4},
\]
\[
\inf_{u\in\left(3/4,1\right)}-\!\!\!\!\!\!\int_{u}^{1}g(s)\de s=\mathcal{ K}-\frac{5}{48},\quad \inf_{u\in\left(0,3/4\right)}-\!\!\!\!\!\!\int_{0}^{u}g(s)\de s=\mathcal{ K}-\frac{3}{16}.
\]
So, we achieve the following estimate for \(\hat c\):
\[\frac{\sqrt{3}}{\mathcal{K}}\leq\hat{c}\leq\frac{\sqrt{3}}{\mathcal{K}-\frac{3}{16}}.
\]
Finally, concerning to the classification of the t.w.s. we have that they are classical for every \(c> \hat c\) by Proposition \ref{p:class} and they are classical also for \(c=\hat c\) since \(\dot D(0)=\dot D(1)=+\infty\) (see Remark  \ref{r:c=}).

\bg
{\bf Example 2.} 
Consider the equation 
\[
\left(u^2-u+\mathcal{ K}\right)u_{\tau}=\left(\left(1/2-u\right)\left(u-u^2\right)^\alpha u_{x}\right)_{x}+(u-u^2)^\beta
\]
where \(\alpha,\beta>0\) with \(\alpha+\beta\ge 1\), \(\mathcal{K}>\frac16\), in which
\[
g(u):=u^2-u+\mathcal{ K}\ , \quad
f\left(u\right)\equiv0
\]
\[
D(u):=\left(1/2-u\right)(u-u^2)^\alpha \ , \quad
\rho\left(u\right):=(u-u^2)^\beta.
\]
Also in this case one can easily verify that all the assumptions of Corollary \ref{c:one} are satisfied, so there exists a threshold value \(\hat c\) such that t.w.s. exist if and only if \(c\ge \hat c\) and they are classical for every \(c>\hat c\).

Instead, regarding the classification of the t.w.s. for \(c=\hat c\), note that since \(K_0^-=\emptyset\), by \eqref{d:c**} we have \(\hat c= \max\{c_1^*, c_2^*\}\)
where \(c_1^*\) [resp. \(c_2^*\)] is the threshold value of the admissible wave speeds in the interval \((0,\frac12)\) [\((\frac12,u)\)] given by Theorem \ref{t:pos} [Theorem \ref{t:neg}].
So, if \(\hat c=c_1^*\) we have \(\dot z(0)=-\hat c g(0)=-\mathcal K \hat c<0\) and consequently the corresponding t.w.s. reaches the equilibrium \(0\) at a finite time (t.w.s. sharp of type (1) or type (3)). Similarly, if  if \(\hat c=c_2^*\) we have \(\dot z(1)=-\hat c g(1)=-\mathcal K \hat c<0\) and consequently the corresponding t.w.s. attains the value of the equilibrium \(1\) at a finite time (t.w.s. sharp of type (2) or type (3)).

Hence, the t.w.s. for \(c=\hat c\) is sharp, but 
since the values \(c_1^*\) and \(c_2^*\) are unknown in general, we can not determine what kind of sharp t.w.s. it is.

\bg
{\bf Example 3.} 
Consider the equation 
\[
u_{\tau}+ u_x=\left(\left(1/2-u\right)^2 u_{x}\right)_{x}+(u-u^2)
\]
 in which
\[
g(u):\equiv 1\ , \quad
f\left(u\right)\equiv 1
\]
\[
D(u):=\left(1/2-u\right)^2 \ , \quad
\rho\left(u\right):=(u-u^2).
\]

In this case we have \(D(u)\ge 0\) for every \(u\in (0,1)\), but \(D(\frac12)=0\). So, we have \(K^+=\{1,2\}\), \(K^-=\emptyset.\) Then, by Theorem \ref{t:main2} we have that there exists a value \(\hat c\) such that the equation adimts t.w.s. for every \(c>\hat c\), and no t.w.s. exists for \(c<\hat c\). Moreover, by Proposition \ref{p:class} we have that the t.w.s are classical for every \(c>\hat c\).

Furthermore, note that in this case we can obtain a precise value for \(\hat c\). Indeed, \(\dot h(0)=\frac14\), \(\dot h(\frac12)=0\), \(H_1^+=\frac14\) and \(H_2^+=\frac{1}{12 \sqrt 6}\). So, by \eqref{eq:stimagen3} we obtain \(\hat c=2\).

\sm
Finally, no t.w.s. exists for \(c=\hat c\), since \(\dot D(\frac12)=0\) but by \eqref{eq:puntoz} we have \(\dot z(\frac12^-)=\frac12\) and \(\dot z(\frac12^+)=-\frac32\). So, the function \(z/D\) has not a continuous extension in the interval \((0.1)\).

\subsection*{Funding Information}
This article was supported by PRIN 2022 -- Progetti di Ricerca di rilevante Interesse Nazionale, ``Nonlinear differential problems with applications to real phenomena'' (2022ZXZTN2). The third author is partially supported by INdAM--GNAMPA project ``Problemi ellittici e sub--ellittici: non linearit\`a, singolarit\`a e crescita critica'' CUP: E53C23001670001.

\subsection*{Acknowledgements}
All three authors are members of the Gruppo Nazionale per l'Analisi Matematica, la Probabilit\`a e le loro Applicazioni (GNAMPA) of the Istituto Nazionale di Alta Matematica (INdAM).

\end{document}